\newtheorem{theorem}{Theorem}
\newtheorem{corollary}[theorem]{Corollary}
\newtheorem{definition}{Definition}
\newtheorem{lemma}[theorem]{Lemma}
\newtheorem{proposition}[theorem]{Proposition}
\newtheorem{observation}{Observation}
\newlength{\saveparindent}
\newcommand{\pp}{\textbf{p}}
\def\ifpdf\input{#.pdf_t}\else\input{#.ps_t}\fi1{\ifpdf\input{#1.pdf_t}\else\input{#1.ps_t}\fi}
\title{Directed cycle double covers: structure and generation of hexagon graphs}
\author{
  Andrea~Jim\'enez\thanks{
  Supported by CNPq (Proc.~477203/2012-4) and FAPESP (Proc.~2011/19978-5).}\\
\small Instituto of Matem\'atica e Estat\'istica \\[-0.8ex]
\small Universidade de S\~ao Paulo \\[-0.8ex]
\small \texttt{ajimenez@ime.usp.br}\\
\and
  Mihyun~Kang\thanks{Partially supported by the German Research Foundation (KA 2748/2-1 and KA 2748/3-1).}\\
\small Institut f\"{u}r Optimierung und Diskrete Mathematik  \\[-0.8ex]
\small Technische Universit\"{a}t Graz \\[-0.8ex]
\small \texttt{kang@math.tugraz.at}\\
  \and
  Martin~Loebl\thanks{Partially supported by the Czech Science Foundation under the contract number P202-13-21988S.}\\
\small Department of Applied Mathematics \& \\[-0.8ex]
\small  Institute for Theoretical Computer Science \\[-0.8ex]
\small Charles University \\[-0.8ex] 
\small \texttt{loebl@kam.mff.cuni.cz}
}
\date{}
\begin{document}

\maketitle
\begin{abstract}
Jaeger's directed cycle double cover conjecture can be formulated as a problem of
  existence of special perfect matchings in a class of graphs that we call hexagon graphs.
In this work, we explore the structure of hexagon graphs.
We show that hexagon graphs are braces that can be generated 
from the ladder on 8 vertices using two types of McCuaig's augmentations. 
\end{abstract}

\section{Introduction}\label{sec:intro} 
The long-standing Jaeger's directed cycle double cover conjecture~\cite{Jaeger19851},
usually known as DCDC conjecture, is broadly considered to
be among the most important open problems in graph theory.
A typical formulation asks whether every 2-connected graph admits a family of 
cycles such that one may prescribe an orientation on each cycle of the family
in such a way that each edge $e$ of the graph belongs to exactly two cycles and 
these cycles induce opposite orientations on $e$. 
In order to prove the DCDC conjecture, a wide variety of 
approaches have arisen~\cite{Jaeger19851, Zhang97integerflows}, 
among them, the topological approach. 
The topological approach claims that 
the DCDC conjecture is equivalent to the statement that every
cubic bridgeless graph admits an embedding in a closed orientable surface 
such that every edge belongs to exactly two distinct face boundaries
defined by the embedding; that is, with no dual loop.

In this work, we formulate the DCDC
  conjecture as a problem  of existence of special perfect matchings in a class 
  of graphs that we call hexagon graphs. Initially, our motivation for the formulation
  of the DCDC conjecture on hexagons are critical embeddings~\cite{kenyon,mercat},
  that in particular are embeddings with no dual~loop.
  
The main goal of this work is to discuss recent progress on the 
  study of the structure of hexagon graphs.
  The class of hexagon graphs of cubic bridgeless graphs turns out to be a subclass of braces.
The class of braces, along with bricks, are a fundamental class of graphs in matching theory,
  mainly because they are building blocks of a perfect matching decomposition procedure;
  namely of the tight cut decomposition procedure~\cite{Lovasz:1987:MSM:30820.30826}.
In~\cite{McCuaig98}, McCuaig introduced a method for 
  generating all braces starting from a large base set of graphs and recursively 
  making use of 4 distinct types of operations.
In this paper, we show that hexagon graphs are braces that can be generated 
  from the ladder on 8 vertices using 2 types of McCuaig's operations.

In the following, we make precise the notions discussed above and
  formally state our main result.

\subsection{Hexagon graphs}

Hexagon graphs are the main ingredient and the center of attention of this work.
In this section, we define the class of hexagon graphs, look over some of its fundamental properties and
  formulate the DCDC conjecture as a question about this new class of graphs. 
Despite our original motivation for this new formulation of the DCDC conjecture are critical embeddings, 
  in this work we do not introduce this notion, and we present the details and proofs regarding the formulation using 
  rotation systems of graphs, a well known and convenient combinatorial representation of embeddings 
  on closed orientable surfaces~~\cite[\S 3.2]{opac-b1131924}.
The advantage of using rotation systems is that we 
  avoid topological arguments and present the equivalence to the DCDC conjecture
  in a purely combinatorial way.

We refer to the complete bipartite graph $K_{3,3}$ as a \emph{hexagon} and
say that a bipartite graph $H$ has a hexagon $h$ if $h$ is a subgraph of $H$.
For a graph $G$ and a vertex $v$ of $G$, let $N_G(v)$ denote the set of neighbors of $v$ in $G$.

\begin{definition}[Hexagon Graphs]\label{def:heaxgraphs} Let $G$ be a cubic graph with vertex set $V$ and edge set $E$.
A hexagon graph of $G$ is a graph $H$ obtained from $G$ following the next rules:
\begin{enumerate}
\item We replace each vertex $v$ in $V$ by a hexagon $h_v$ of $H$ so that  
for every pair $u$, $v \in V$, if $u \neq v$, then $h_u$ and $h_v$ are vertex disjoint. 
      Moreover,  $V(H) = \{V(h_v): v \in V \}$.
\item For each vertex $v \in V,$ let  $\{v_i: i \in \mathbb{Z}_6\}$ denote the vertex set of $h_v$ 
      and $\{v_{i}v_{i+1}, v_{i}v_{i+3}: i \in \mathbb{Z}_6\}$ its edge set. 
      With each neighbor $u$ of $v$ in $G$, we associate an index $i_{v(u)}$ 
      from the set $\{0,1,2\} \subset \mathbb{Z}_6$ so that if 
      $N_G(v)=\{u,w,z\}$, then $i_{v(u)}$, $i_{v(w)}$, $i_{v(z)}$ are pairwise distinct. 
\item \label{def.hex_third} See Figure~\ref{fig:hexagons}. Let $X= \cup_{v\in V} \{v_{2i} :i\in \mathbb{Z}_6\}$ and 
      $Y= \cup_{v\in V} \{v_{2i+1} :i\in \mathbb{Z}_6\}$.
      We replace each edge $uv$ in $E$ by two vertex disjoint edges $e_{uv}$, $e'_{uv}$ so that 
      if both $v_{i_{v(u)}}$, $u_{i_{u(v)}}$ belong to either $X$ or $Y$, then
      $e_{uv}= v_{i_{v(u)}} u_{i_{u(v)}+3}$, $e'_{uv}= v_{i_{v(u)}+3} u_{i_{u(v)}}$.
      Otherwise, 
      $e_{uv}= v_{i_{v(u)}} u_{i_{u(v)}}$, $e'_{uv}= v_{i_{v(u)}+3} u_{i_{u(v)}+3}$.
      Moreover, $E(H)=\{E(h_v): v \in V\} \cup \{ e_{uv}, e'_{uv} : uv \in E\}$.
      \end{enumerate} 
\end{definition}

\begin{figure}[h] 
 \centering 
 \ifpdf\input{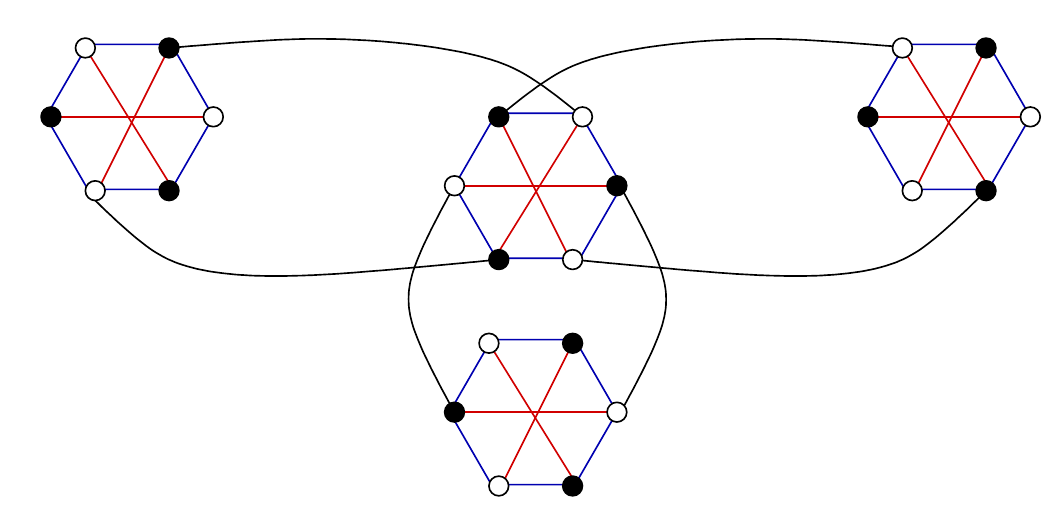_t}\else\input{hexagons.ps_t}\fi
 \caption{Local representation of the hexagon-neighborhood of a hexagon $h_v$ in a hexagon graph $H$
 of a cubic graph $G$. The hexagon $h_v$ is associated with vertex $v$, where $N_{G}(v)=\{u,w,z\}$.
 Red edges are depicted as red lines, blue edges are depicted as blue lines and 
 white edges as black lines. The set $X$ is represented by filled-in white vertices
 and the set $Y$ by filled-in black vertices. Moreover, $i_{v(u)} =0$, $i_{v(w)} =1$, $i_{v(z)} =2$, $i_{u(v)}=0$,
 $i_{w(v)}=2$ and $i_{u(v)}=2$.}
\label{fig:hexagons}
\end{figure}

We say that $h_v$ is the hexagon of $H$ associated with the vertex $v$ 
  of $G$ and that $\{h_v: v \in V \}$ is the \emph{set of hexagons of $H$}. 
For $uv \in E$, we say that $h_u$ and $h_v$ are \emph{hexagon-neighbors} in $H$.	
We shall refer to the set of edges $\bigcup_{v\in V} \{v_{i}v_{i+3}: i\in\mathbb{Z}_6\}$ as the 
set of \emph{red edges of $H$}, to the set of edges $\{ e_{uv}, e'_{uv} : uv \in E\}$
as the set of \emph{white edges of $H$} and finally, to the set of edges
$\bigcup_{v\in V} \{v_{i}v_{i+1}: i\in\mathbb{Z}_6\}$ as the set of \emph{blue edges of $H$}
 (see Figure~\ref{fig:hexagons}). Moreover, we shall 
 say that a perfect matching of $H$ containing only blue edges is a \emph{blue perfect matching}.  

\begin{observation}
\label{o.bip}
Hexagon graphs of cubic graphs are bipartite.
\end{observation}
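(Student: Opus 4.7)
The plan is to exhibit the bipartition $(X,Y)$ already written into Definition~1(3) and verify that every edge of $H$ has one endpoint in $X$ and the other in $Y$. Since the vertex set of $H$ is, by construction, the disjoint union $V(H) = X \cup Y$ where $X$ consists of vertices whose hexagon-index is even and $Y$ of those whose hexagon-index is odd, it suffices to go through the three edge classes of $H$ (red, blue, white) and check that each edge crosses the partition.

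For the intra-hexagon edges the check is immediate from the parity of indices in $\mathbb{Z}_6$. A blue edge $v_i v_{i+1}$ joins an even index to an odd one, and a red edge $v_i v_{i+3}$ does the same, since $i$ and $i+3$ differ in parity. Hence each hexagon $h_v$ (which is itself a $K_{3,3}$) contributes only edges crossing $(X,Y)$.

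The key step is the white edges, and this is precisely where the case distinction in Definition~1(3) has been engineered. If the two ``contact'' indices $i_{v(u)}$ and $i_{u(v)}$ have the same parity, i.e.\ both endpoints $v_{i_{v(u)}}$ and $u_{i_{u(v)}}$ lie in the same class of $(X,Y)$, then the definition shifts one of the indices by $3$, so that $e_{uv} = v_{i_{v(u)}}u_{i_{u(v)}+3}$ and $e'_{uv} = v_{i_{v(u)}+3}u_{i_{u(v)}}$ each have endpoints of opposite parity. If the two contact indices have different parities, the edges $e_{uv}$ and $e'_{uv}$ are taken without the shift, and automatically cross $(X,Y)$. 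In both cases, $e_{uv}$ and $e'_{uv}$ respect the bipartition.

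There is no real obstacle here; the only thing to be careful about is that the two cases in Definition~1(3) are exhaustive and that the $+3$ shift indeed flips parity in $\mathbb{Z}_6$, both of which are immediate. Combining the three verifications shows that every edge of $H$ has one endpoint in $X$ and one in $Y$, so $H$ is bipartite with bipartition $(X,Y)$.
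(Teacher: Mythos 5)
Your proof is correct and follows exactly the paper's approach: take the bipartition $(X,Y)$ from Definition~\ref{def:heaxgraphs}(\ref{def.hex_third}) and check that every edge crosses it. The paper states this verification in one sentence; you simply carry out the parity check for each edge class explicitly.
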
 
\begin{proof} 
Let $H$ be a hexagon graph of a cubic bridgeless graph. Let $X$, $Y$ be the sets
defined in Definition~\ref{def:heaxgraphs}, item~\textit{\ref{def.hex_third}}. Note that $\{X, Y\}$
is a partition of $V(H)$ and that there are no edges connecting vertices
of the same partition class.
\end{proof}

The following two observations are straightforward.

\begin{observation}
\label{o.desc}
Let $G$ be a cubic graph and $H$ be a hexagon graph of $G$. The following properties hold.
\begin{enumerate}
\item $H$ is a 4-regular graph.
\item No white edge of $H$ connects two vertices of the same blue hexagon. 
\item Both, the set of red edges of $H$ and the set of white edges of $H$ form a perfect matching of $H$.
\item Let $|V(G)|$ denote the cardinality of $V(G)$. There are $2^{|V(G)|}$ distinct blue perfect matchings.
\end{enumerate}
\end{observation}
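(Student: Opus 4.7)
The plan is to verify each of the four items by a local inspection of a single hexagon, since by Definition~\ref{def:heaxgraphs} the structure of $H$ around any $h_v$ is completely determined by $N_G(v)$ together with the indices $i_{v(u)}$, $i_{v(w)}$, $i_{v(z)}$.

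For item~(1), I would fix an arbitrary vertex $v_i \in V(h_v)$ and count its incident edges by colour. The blue edges of $h_v$ contribute the two neighbours $v_{i-1}$ and $v_{i+1}$, and the red edge $v_iv_{i+3}$ contributes one more. The remaining point is to show that $v_i$ is the endpoint of exactly one white edge, and this is precisely where item~(2) of Definition~\ref{def:heaxgraphs} enters: since $i_{v(u)}, i_{v(w)}, i_{v(z)}$ are pairwise distinct elements of $\{0,1,2\}$, the three pairs $\{i_{v(u)}, i_{v(u)}+3\}$, $\{i_{v(w)}, i_{v(w)}+3\}$, $\{i_{v(z)}, i_{v(z)}+3\}$ partition $\mathbb{Z}_6$, so each vertex of $h_v$ is the endpoint of exactly one of the six white edges produced at $v$. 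This yields degree $2+1+1 = 4$. Item~(2) of the observation is then immediate from item~(1) of Definition~\ref{def:heaxgraphs}: the two white edges replacing an edge $uv \in E$ connect the vertex-disjoint hexagons $h_u$ and $h_v$, so neither of them lies inside a single hexagon.

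For item~(3), I would observe that the three red edges $v_0v_3$, $v_1v_4$, $v_2v_5$ form a perfect matching of each $h_v$, and the union over all $v \in V$ gives a perfect matching of $H$; the incidence count already carried out in item~(1) shows that every vertex of $H$ is matched by exactly one white edge, so the white edges likewise form a perfect matching. For item~(4), a blue perfect matching of $H$ must restrict on each hexagon $h_v$ to a perfect matching of its blue 6-cycle, since blue edges live only inside hexagons; each such 6-cycle admits exactly two perfect matchings, and these choices are independent across distinct hexagons, so the product rule yields $2^{|V(G)|}$. I do not anticipate any genuine obstacle: the whole observation reduces to bookkeeping inside a single hexagon, and the only subtlety worth stating explicitly is the $\mathbb{Z}_6$-partition argument in item~(1), which records what makes the indexing convention in Definition~\ref{def:heaxgraphs} consistent.
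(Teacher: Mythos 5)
Your verification is correct; the paper states this observation without proof (``straightforward''), and your argument is exactly the intended bookkeeping, with the one genuinely load-bearing point --- that the pairs $\{i_{v(u)},i_{v(u)}+3\}$, $\{i_{v(w)},i_{v(w)}+3\}$, $\{i_{v(z)},i_{v(z)}+3\}$ partition $\mathbb{Z}_6$, so each hexagon vertex meets exactly one white edge --- correctly identified and used for items (1) and (3). Nothing to add.
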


\begin{observation}
\label{o.iso}
If $H$ and $H'$ are hexagon graphs of a cubic graph $G$, then $H$ and $H'$ are isomorphic.
\end{observation}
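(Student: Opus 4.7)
The plan is to observe that the only genuine freedom in Definition~\ref{def:heaxgraphs} is, at each vertex $v\in V(G)$, the bijection $i_{v(\cdot)}\colon N_G(v)\to\{0,1,2\}$, equivalently the assignment of the three neighbors of $v$ to the three \emph{axes} $\{v_0,v_3\},\{v_1,v_4\},\{v_2,v_5\}$ of $h_v$. As a first step I would reformulate item~\ref{def.hex_third} uniformly: the two white edges associated with $uv\in E(G)$ form the unique matching between the axis $\{v_{i_{v(u)}},v_{i_{v(u)}+3}\}$ of $h_v$ and the axis $\{u_{i_{u(v)}},u_{i_{u(v)}+3}\}$ of $h_u$ whose edges cross the bipartition $(X,Y)$. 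Both cases of the original definition collapse to this statement, because each axis contains exactly one vertex of $X$ and one of $Y$, so the bipartition requirement forces the matching.

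Next I would analyse the subgroup $\Gamma_v\leq\mathrm{Aut}(h_v)$ of automorphisms of $h_v$ preserving the blue 6-cycle $v_0v_1v_2v_3v_4v_5v_0$, the red matching $\{v_iv_{i+3}:i\in\mathbb{Z}_6\}$, and the bipartition $(X\cap V(h_v),\,Y\cap V(h_v))$. The dihedral group of order $12$ of the blue 6-cycle already preserves the red matching (since diameters are intrinsic to the hexagon); of its $12$ elements only the rotations $v_i\mapsto v_{i+k}$ with $k\in\{0,2,4\}$ and the three reflections through antipodal pairs of vertices preserve the bipartition, so $|\Gamma_v|=6$ and $\Gamma_v\cong D_3$. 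A direct inspection shows that $\Gamma_v$ acts faithfully on the three axes as the full symmetric group $S_3$: the rotation $v_i\mapsto v_{i+2}$ cyclically permutes the axes, while the reflection through $v_0,v_3$ fixes $\{v_0,v_3\}$ and swaps the other two. Consequently $\Gamma_v$ is simply transitive on the six bijections $N_G(v)\to\{0,1,2\}$.

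Given two hexagon graphs $H,H'$ of $G$ with index data $(i_{v(\cdot)})$ and $(i'_{v(\cdot)})$, I would then assemble the isomorphism $\phi\colon V(H)\to V(H')$ vertex-wise: for each $v\in V(G)$ let $\sigma_v\in\Gamma_v$ be the unique element sending the $H$-axis of each neighbor $u$ of $v$ to its $H'$-axis, and set $\phi|_{V(h_v)}=\sigma_v$. Preservation of blue and red edges inside every $h_v$ is immediate from the definition of $\Gamma_v$. For the white edges of $uv\in E(G)$, the reformulation in the first paragraph reduces the check to two verifications: that $\phi$ maps the axis-pair used in $H$ to the axis-pair used in $H'$, which holds by the choice of $\sigma_v,\sigma_u$; and that $\phi$ preserves the partition $(X,Y)$, which holds because every $\sigma_v$ does. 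The main obstacle I anticipate is precisely this bookkeeping for white edges across the two cases in the original item~\ref{def.hex_third}; collapsing them into the single axis-plus-bipartition description at the outset is what keeps the argument short.
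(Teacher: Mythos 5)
Your argument is correct. The paper itself offers no proof of Observation~\ref{o.iso} (it is declared straightforward), so there is nothing to compare against; your write-up simply supplies the missing justification, and it does so soundly. The two points that actually need checking are exactly the ones you isolate: first, that the two cases of Definition~\ref{def:heaxgraphs}.\textit{\ref{def.hex_third}} collapse to ``the unique perfect matching between the two axes whose edges cross the partition $(X,Y)$'' (true, since each axis $\{v_j,v_{j+3}\}$ meets $X$ and $Y$ in one vertex each, so exactly one of the two matchings between two axes is crossing); and second, that the stabilizer $\Gamma_v$ of the blue $6$-cycle, the red matching and the parity classes acts on the three axes as the full $S_3$ (the even rotations and the three vertex reflections, as you say). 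Given these, the vertex-wise assembly of $\phi$ and the white-edge verification go through exactly as you describe, since $\phi$ carries axes to the correct axes and preserves $(X,Y)$, hence carries the unique crossing matching to the unique crossing matching. This is more machinery than strictly necessary --- one could just exhibit, for a single hexagon, the permutation realizing any one transposition or $3$-cycle of its axes and compose --- but the group-theoretic packaging is clean and complete.
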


\subsubsection*{Rotation systems and embeddings without dual loops}

Recall that our goal in this section is to reformulate the following statement:
every cubic bridgeless graph admits an embedding on a closed orientable surface 
without dual loops. For this purpose, we now introduce a 
combinatorial representation of embedding of graphs on closed orientable surfaces; namely \emph{rotation systems}. 

Let $G$ be a graph. For each $v \in V(G)$, let $\pi_{v}$ be a cyclic permutation of the
edges incident with $v$. 
A collection $\pi = \{\pi_{v} : v \in V(G)\}$ is called a 
\emph{rotation system} of $G$. The proof of the following statement
can be found in~\cite[\S 3.2]{opac-b1131924}.  

\begin{theorem} 
Let $\pi$ be a rotation system of a graph $G$. Then $\pi$ encodes an embedding of $G$ 
on a closed orientable surfaces with set of face boundaries 
\begin{equation}\label{eq:faceboundaries} \{e_1e_2 \cdots e_k: \, e_i=v^iv^{i+1}
\in E(G), \, 
\pi_{v^{i+1}}(e_i)= e_{i+1}, 
\, e_{k+1}=e_1 \,\, \text{and $k$ minimal}\}. \end{equation}
Moreover, the converse holds. That is, every embedding of $G$ on a
closed orientable surface defines a rotation system $\pi$ of $G$
where the set of face boundaries is given by the set described in \emph{(\ref{eq:faceboundaries})}.
\end{theorem}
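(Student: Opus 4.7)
The plan is to prove both directions via the classical polygon-gluing construction. For the forward direction, given $\pi = \{\pi_v : v \in V(G)\}$, I would first collect all the cyclic sequences described in (\ref{eq:faceboundaries}). A short bookkeeping argument shows that every ordered pair $(v^i, v^{i+1})$ with $v^iv^{i+1} \in E(G)$ appears in exactly one such sequence, so the collection $\mathcal{F}$ of orbits is well-defined and finite. For each orbit in $\mathcal{F}$, I introduce an abstract closed polygonal disk whose boundary is labelled by that orbit, and glue these disks to $G$ along their boundary edges. The result is a $2$-dimensional CW complex $\Sigma$ whose $1$-skeleton is $G$.

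I would then verify that $\Sigma$ is a closed orientable surface in which $G$ is $2$-cell embedded. Points in the interior of polygons or edges admit disk neighbourhoods automatically, since the orbit formula pairs each edge of $G$ with exactly two polygon-sides. The nontrivial local check is at each vertex $v$: the polygon corners meeting $v$ must close up into a single disk neighbourhood, which follows because the successor rule in (\ref{eq:faceboundaries}) uses $\pi_{v^{i+1}}$ so that these corners are permuted consistently with $\pi_v$, forming a single $\pi_v$-orbit and tiling an open disk around $v$ in the prescribed cyclic order. Orientability is then obtained by orienting each polygon so that its boundary is read in the direction $v^i \to v^{i+1}$; every edge of $G$ receives opposite orientations from its two incident polygons, which is precisely the local criterion for an orientation of $\Sigma$. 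By construction, the set of face boundaries of the resulting embedding is exactly (\ref{eq:faceboundaries}).

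For the converse, I would fix a global orientation of the closed orientable surface on which $G$ is embedded. A small disk neighbourhood $D_v$ of each vertex $v$ inherits this orientation, and the induced cyclic order of $\partial D_v$ restricts to a cyclic permutation $\pi_v$ of the edges incident with $v$; the collection $\pi=\{\pi_v\}$ is the desired rotation system. Tracing any face boundary from an oriented edge $v^iv^{i+1}$ shows that the next edge of $G$ met at $v^{i+1}$ is exactly $\pi_{v^{i+1}}(v^iv^{i+1})$, so the face boundaries are recovered by the orbits in (\ref{eq:faceboundaries}).

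The main obstacle will be the disk-neighbourhood check at vertices in the forward direction: one must rule out a pinched neighbourhood in which several disks meet only at $v$. This is exactly the point where the specific form of the successor rule — using $\pi_{v^{i+1}}$ rather than its inverse, and tracking ordered rather than unordered edges — becomes essential, and where orientability of $\Sigma$ is actually forced by the construction.
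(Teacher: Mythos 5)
The paper does not prove this theorem at all: it is the classical Heffter--Edmonds rotation principle, and the authors simply cite Mohar and Thomassen, \emph{Graphs on surfaces}, \S 3.2 for the proof. Your polygon-gluing outline is precisely the standard argument given in that reference (orbits of the arc-successor permutation give the $2$-cells, the single-$\pi_v$-orbit check gives the disk neighbourhood at each vertex, and coherent traversal directions give orientability), so it is the right proof and matches the source the paper relies on. One small imprecision: an edge need not lie on two \emph{distinct} polygons --- it may appear twice on the boundary of a single face --- so the orientability check should be phrased in terms of the two traversals (the two arcs $uv$ and $vu$, each occurring in exactly one orbit) inducing opposite directions on the edge, rather than in terms of ``its two incident polygons''.
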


In~Lemma~\ref{lemma:hexagon-embedding}, we state that blue perfect matchings of hexagon 
  graphs of a cubic graph $G$ define embeddings of $G$ on closed orientable surfaces with  distinguished set of face boundaries,
  and vice versa.
The proof is based on a natural bijection between blue perfect matchings and rotation systems. 
We first need to make an observation. 

\begin{observation}
Let $M$ be a blue perfect matching of $H$ and let $W$ be the set of white edges of $H$. Each cycle 
$C$ in $M\Delta W$ induces a subgraph in $G$ defined by the set of edges
$ \{uv \in E(G): e_{uv} \in C \, \text{or} \,\, e'_{uv} \in C\}$.
\end{observation}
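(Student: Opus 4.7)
My plan is to verify the claim by unpacking the relevant definitions; the statement is essentially a bookkeeping check, so no step should require a nontrivial combinatorial argument. The only mild care is in confirming that the map from the white edges of $H$ back to the edges of $G$ is well defined.

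First, I would note that the sets of blue edges and of white edges of $H$ are disjoint by Definition~\ref{def:heaxgraphs}, so $M \cap W = \emptyset$ and hence $M \Delta W = M \cup W$. Both $M$ (by hypothesis) and $W$ (by Observation~\ref{o.desc}) are perfect matchings of $H$, so their union is a spanning $2$-regular subgraph of $H$ and decomposes uniquely into a disjoint union of (necessarily even) cycles.

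Next, I would argue that each such cycle $C$ alternates strictly between edges of $M$ and edges of $W$: at any vertex $x \in V(C)$ the two edges of $C$ incident to $x$ are distinct, and since both $M$ and $W$ are matchings, each of them contributes at most one edge at $x$. Hence one edge of $C$ at $x$ is blue (coming from $M$) and the other is white (coming from $W$). In particular every white edge of $C$ is of the form $e_{uv}$ or $e'_{uv}$ for some $uv \in E(G)$.

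Finally, Definition~\ref{def:heaxgraphs}, item~\ref{def.hex_third}, associates to every $uv \in E(G)$ the specific pair of white edges $\{e_{uv}, e'_{uv}\} \subseteq E(H)$, and distinct edges of $G$ are assigned disjoint such pairs. Hence the map sending each white edge of $H$ back to the unique edge of $G$ it represents is well defined, and its image on the white edges of $C$ is precisely $\{uv \in E(G) : e_{uv} \in C \text{ or } e'_{uv} \in C\}$. This is a subset of $E(G)$ and therefore determines the claimed subgraph of $G$, completing the verification.
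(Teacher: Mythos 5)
Your proof is correct: it is exactly the routine unpacking of Definition~\ref{def:heaxgraphs} and Observation~\ref{o.desc} (disjoint perfect matchings, hence $M\Delta W=M\cup W$ is $2$-regular and splits into alternating cycles, and each white edge determines a unique edge of $G$) that the paper treats as immediate and states without proof. There is nothing to add or correct.
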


\begin{theorem}\label{lemma:hexagon-embedding} 
Let $G$ be a cubic graph, $H$ be the hexagon graph of $G$ and $W$ be the set of white edges of $H$.
Each blue perfect matching $M$ of $H$ encodes 
an embedding of $G$ on a closed orientable surface with a set of face boundaries,
the set of subgraphs of $G$ induced by the cycles in $M\Delta W$. 
Moreover, the converse holds. That is, each embedding of $G$ on a closed orientable surface 
defines a blue perfect matching $M$ of $H$, where the set of subgraphs of $G$ induced by all cycles in $M\Delta W$
coincides with the set of face boundaries of the embedding.
\end{theorem}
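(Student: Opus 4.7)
The plan is to establish an explicit bijection between blue perfect matchings of $H$ and rotation systems of $G$, and then verify that, under this bijection, the cycles of $M \Delta W$ projected to $G$ are precisely the face boundaries produced by the rotation-system theorem cited above. First I would note that the set of blue edges of a single hexagon $h_v$ forms a $6$-cycle, so a blue perfect matching restricted to $h_v$ must be one of exactly the two perfect matchings of this $6$-cycle: $M_v^+=\{v_0v_1, v_2v_3, v_4v_5\}$ or $M_v^-=\{v_1v_2, v_3v_4, v_5v_0\}$. A blue perfect matching of $H$ is thus an independent choice of one of these two options at each hexagon, which is consistent with the $2^{|V(G)|}$ count in the observations.

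Next I would use that each vertex $v_i$ of $h_v$ is incident to exactly one white edge, and that the white edges at $v_i$ and $v_{i+3}$ lead to the same neighbor of $v$ in $G$. Writing $N_G(v)=\{u,w,z\}$ with $i_{v(u)}=0$, $i_{v(w)}=1$, $i_{v(z)}=2$, a direct check shows that choosing $M_v^+$ pairs the white half-edges at $v$ according to the cyclic rotation $(u,w,z)$ and choosing $M_v^-$ according to the opposite rotation $(u,z,w)$. Hence the choice of $M$ on $h_v$ corresponds bijectively to a cyclic permutation $\pi_v$ of the three edges of $G$ incident to $v$, and therefore the choice of a blue perfect matching $M$ of $H$ corresponds bijectively to a rotation system $\pi=\{\pi_v : v\in V(G)\}$ of $G$.

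Finally I would trace the cycles of $M\Delta W$. Since $M$ and $W$ are edge-disjoint perfect matchings (blue and white respectively), $M\Delta W=M\cup W$ is $2$-regular, hence a disjoint union of cycles alternating blue and white. Following such a cycle: after entering $h_v$ along a white edge at $v_i$, the next step is the unique blue $M$-edge $v_iv_j$ inside $h_v$, and then we leave $h_v$ along the white edge at $v_j$. Projecting to $G$ by replacing each pair of white edges of $h_v$ by the corresponding edge of $G$, we obtain a closed walk whose consecutive edges at $v$ are related exactly by $\pi_v$, which is the recipe (\ref{eq:faceboundaries}) for face boundaries in the rotation-system theorem. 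Applying that theorem yields an embedding of $G$ on a closed orientable surface whose face boundaries are precisely the subgraphs of $G$ induced by the cycles of $M\Delta W$, and the converse direction follows from the same bijection read backwards.

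The main obstacle will be the careful bookkeeping forced by the case split in Definition~\ref{def:heaxgraphs}(\ref{def.hex_third}), where the definition of $e_{uv}, e'_{uv}$ depends on whether the endpoints $v_{i_{v(u)}}$ and $u_{i_{u(v)}}$ sit in the same part of the bipartition $(X,Y)$ or in opposite parts. I expect this case analysis to be exactly what guarantees that leaving $h_v$ through a white edge and taking one blue $M$-step inside the neighboring hexagon $h_u$ enters $h_u$ at the index dictated by $\pi_u$, so the projection of cycles in $M\Delta W$ realizes the face boundaries of $\pi$ and not some twisted variant.
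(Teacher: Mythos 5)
Your proposal is correct and follows essentially the same route as the paper: the paper's proof constructs exactly this bijection $f$ (the two blue perfect matchings $\{v_0v_1,v_2v_3,v_4v_5\}$ and $\{v_1v_2,v_3v_4,v_5v_0\}$ of each hexagon's blue $6$-cycle mapped to the two cyclic orders of the edges at $v$) and then declares the verification that the cycles of $M\Delta W$ project to the face boundaries of (\ref{eq:faceboundaries}) to be routine. Your outline simply spells out that routine check (the alternating blue--white cycle tracing and the bipartition case split for white edges) in more detail than the paper does.
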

\begin{proof} 
It suffices to prove that there is a bijective function $f$ from the set of blue perfect matchings of $H$
 to the set of rotation systems of $G$ such that for every blue perfect matching $M$ of $H$, 
 the set of subgraphs of $G$ induced by the cycles in $M\Delta W$
 equals the set of subgraphs described in~(\ref{eq:faceboundaries}) defined by the rotation system $f(M)=\pi$.
  
Let $v \in V(G)$, $N_{G}(v)=\{u,w,z\}$, and without loss of generality (by Observation~\ref{o.iso}) 
we assume that $i_{v(u)}=0$, $i_{v(w)}=1$ and $i_{v(z)}=2$. Let $M$ be a blue perfect matching of $H$. 
The restriction of $M$ to $h_v$ is either $\{v_{0}v_{1},v_{2}v_{3}, v_{4}v_{5}\}$ or 
$\{v_{1}v_{2},v_{3}v_{4}, v_{5}v_{0}\}$. If the restriction is $\{v_{0}v_{1},v_{2}v_{3}, v_{4}v_{5}\}$,
then the cyclic permutation of the edges incident with $v$ in the rotation system $f(M)=\pi$ of $G$
is $\pi_{v}= (uv\,\,wv\,\,zv)$. 
Otherwise, the cyclic permutation is given by $\pi_{v}= (uv\,\,zv\,\,wv)$. 
It is a routine to check that $f$ is the desired bijection.
\end{proof}
 
The following result is crucial for our approach.

\begin{proposition}\label{lemma:dualloops}
Let $G$ be a cubic graph, $H$ be the hexagon graph of $G$, $M$ be a blue perfect matching of $H$ and $W$ be the set of white edges of $H$.
 The embedding of $G$ encoded by $M$ has a dual loop if and only if there is a cycle
 in $M\Delta W$ that contains the end vertices of a red edge.
\end{proposition}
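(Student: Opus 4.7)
The plan is to trace through the bijection $f$ of Theorem~\ref{lemma:hexagon-embedding} and keep careful track of multiplicities. A dual loop of an embedding is, by definition, an edge $uv$ of $G$ whose two sides belong to the same face; equivalently, $uv$ appears twice on the closed walk bounding that face. Under the correspondence from (\ref{eq:faceboundaries}), each traversal of an edge $uv \in E(G)$ along a face boundary corresponds to exactly one of the two white edges $e_{uv}$ and $e'_{uv}$: indeed, the face boundary subgraphs of $G$ induced by cycles of $M \Delta W$ are described precisely by the sets $\{uv \in E(G) : e_{uv} \in C \text{ or } e'_{uv} \in C\}$. Hence I would first argue that the embedding encoded by $M$ has a dual loop at $uv$ if and only if both $e_{uv}$ and $e'_{uv}$ belong to the same cycle of $M \Delta W$.

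For the forward direction, assume there is a dual loop at some edge $uv$, and let $C$ be the cycle of $M \Delta W$ corresponding to the face that meets $uv$ on both sides. By the preceding reduction, both $e_{uv}$ and $e'_{uv}$ lie on $C$. By Definition~\ref{def:heaxgraphs}, the endpoints of these two white edges inside the hexagon $h_v$ are precisely $v_{i_{v(u)}}$ and $v_{i_{v(u)}+3}$, which are joined by the red edge $v_{i_{v(u)}} v_{i_{v(u)}+3}$ of $H$. Therefore $C$ contains the two endpoints of a red edge.

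For the converse, suppose some cycle $C$ of $M \Delta W$ contains both endpoints of a red edge $v_i v_{i+3}$. Since $i_{v(u)}$ ranges over $\{0,1,2\}$ as $u$ ranges over $N_G(v)$, exactly one neighbor $u$ of $v$ satisfies $\{i_{v(u)}, i_{v(u)}+3\} = \{i, i+3\}$ in $\mathbb{Z}_6$, and the two white edges incident to $h_v$ corresponding to $u$ are precisely the unique white edges at $v_i$ and at $v_{i+3}$, namely $e_{uv}$ and $e'_{uv}$. Because $M \cup W$ is $2$-regular with exactly one blue edge (from $M$) and one white edge (from $W$) at each vertex, the cycle $C$ is forced to use these two white edges; hence both $e_{uv}$ and $e'_{uv}$ lie on $C$, and by the reduction we obtain a dual loop at $uv$.

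The main subtlety lies in the initial reduction, because the cycles of $M \Delta W$ were described in Theorem~\ref{lemma:hexagon-embedding} as \emph{sets} of edges of $G$, while dual loops concern multiplicities in the closed walk bounding a face. I would verify the reduction directly from formula~(\ref{eq:faceboundaries}) and the explicit bijection $f$ built in the proof of Theorem~\ref{lemma:hexagon-embedding}, checking that consecutive white edges along a cycle of $M \cup W$ correspond to consecutive edges of the associated face-bounding closed walk, so that each of the (at most two) occurrences of $uv$ on that walk is matched with exactly one of $e_{uv}$, $e'_{uv}$; this is the only step requiring genuine combinatorial bookkeeping, and it is routine once the bijection $f$ is unpacked at a single hexagon.
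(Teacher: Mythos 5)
Your proof is correct and takes essentially the same route as the paper's: both reduce the existence of a dual loop at $uv$ to the condition that $e_{uv}$ and $e'_{uv}$ lie on the same cycle of $M\Delta W$, and then identify the $h_v$-endpoints of these two white edges as the two ends of a red edge. You merely spell out the multiplicity bookkeeping and the converse step (the unique neighbor $u$ attached to a given red edge $v_iv_{i+3}$) that the paper compresses into ``The lemma follows.''
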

\begin{proof}
An embedding of $G$ has a dual loop if and only if there is an edge $uv \in E(G)$ that belongs to exactly  
one face boundary, say $C'$. The face boundary $C'$ is a subgraph of $G$ 
induced by a cycle $C$ of $M\Delta W$. We have $C'$ is the only subgraph induced 
by a cycle of $M\Delta W$ that contains $uv$ if and only if $e_{uv}$ and $e'_{uv}$
belong to $C$. The lemma follows.
\end{proof}

Motivated by Proposition~\ref{lemma:dualloops}, we shall say that a blue perfect matching $M$ is {\em safe} if no cycle of 
$M\Delta W$ contains the end vertices of a red edge. 
In Corollary~\ref{thm.one} we establish the formulation of the DCDC Conjecture on hexagon graphs.
Note that the result of Corollary~\ref{thm.one} follows directly
from Theorem~\ref{lemma:hexagon-embedding} and Proposition~\ref{lemma:dualloops}.

\begin{corollary}
\label{thm.one}
A cubic graph $G$ has a directed cycle double cover if and only if its hexagon graph $H$ admits a safe 
perfect matching.
\end{corollary}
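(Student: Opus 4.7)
The plan is to chain the two preceding results through the topological reformulation of the DCDC conjecture recalled in Section~\ref{sec:intro}: a cubic bridgeless graph $G$ has a directed cycle double cover if and only if $G$ admits an embedding on a closed orientable surface with no dual loop, i.e., in which every edge lies on exactly two distinct face boundaries. This is the starting point, and from there the argument is essentially a composition of the two bijective correspondences already established.

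Concretely, I would proceed in a single chain of equivalences. By Theorem~\ref{lemma:hexagon-embedding}, the embeddings of $G$ on closed orientable surfaces are in bijection with the blue perfect matchings $M$ of $H$, and under this bijection the face boundaries of the embedding corresponding to $M$ are precisely the subgraphs of $G$ induced by the cycles of $M\Delta W$. By Proposition~\ref{lemma:dualloops}, the embedding encoded by $M$ has a dual loop if and only if some cycle of $M\Delta W$ contains the end vertices of a red edge; equivalently, the embedding has no dual loop if and only if $M$ is safe. Composing these equivalences yields: $G$ has a directed cycle double cover if and only if $G$ has an embedding on a closed orientable surface with no dual loop, if and only if there exists a blue perfect matching $M$ of $H$ with $M$ safe, which is precisely the statement of the corollary.

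The main (and essentially only) thing to verify is that the two cited results refer to the same bijection $M \mapsto \pi$ and hence to the same notion of face boundaries, so that the equivalences can legitimately be composed; since both results are phrased in terms of the same set $M\Delta W$ and the same cycle-to-subgraph correspondence, this check is immediate. No additional technical ingredient is needed, which matches the remark in the excerpt that the corollary follows directly from Theorem~\ref{lemma:hexagon-embedding} and Proposition~\ref{lemma:dualloops}.
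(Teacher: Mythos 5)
Your proposal is correct and follows exactly the route the paper intends: it invokes the topological reformulation of the DCDC conjecture (existence of an embedding with no dual loop) and composes Theorem~\ref{lemma:hexagon-embedding} with Proposition~\ref{lemma:dualloops}, which is precisely what the paper means when it says the corollary ``follows directly'' from those two results. No gaps.
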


\subsection{Braces}\label{subsec:braces}
A \emph{brace} is a simple (that is, no loops and no multiple edges), connected, bipartite graph 
  on at least six vertices, and 
  with a perfect matching such that for every pair of nonadjacent edges, there is a perfect 
  matching containing the pair of edges. In~\cite{McCuaig98}, McCuaig presented a method 
  for generating braces. He showed that all braces can be constructed from a base set using four operations.  
  In the following we describe McCuaig's method for generating braces.

Let $H$ be a bipartite graph and $x$ be a vertex of $H$ of degree at least 4. 
  Let $N_1, N_2$ be a partition of $N_H(x)$ such that $|N_1|,|N_2| \geq 2$. 
  Let $\{x^1, v, x^2 \}$ be a set of vertices such that $\{x^1, v, x^2 \} \cap V(H)=\emptyset$.
The \emph{expansion of $x$ to $x^1vx^2$}, or briefly an \emph{expansion of $x$} is 
  the operation composed of the following three steps: (i) delete $x$, (ii) add the new 
  path $x^1vx^2$, and (3) connect every vertex of $N_1$ ($N_2$, respectively) to the vertex $x^1$ 
  ($x^2$, respectively). For $i \in \{1,2\}$, we say that $N_i$ is the partition associated with $x^i$.
Note that if $H'$ is a graph obtained from $H$ by the expansion of a vertex, 
  then $H'$ is also bipartite.\\ 
  \emph{Augmentations.} If $H'$ is a bipartite graph obtained 
  from $H$ by adding a new edge, then we say that $H'$ is obtained from $H$ by a \emph{type-1 augmentation}. 
  Let $x$ and $w$ be two vertices in the same partition class of $H$ 
  such that $x$ has degree at least $4$. 
  If $H'$ is obtained from $H$ expanding $x$   
  to $x^1 v x^2$ and adding the new edge $vw$, then 
  we say that $H'$ is obtained from $H$ by a \emph{type-2 augmentation}. 
  Let $x$ and $y$ be two vertices of $H$ of distinct partition classes such that $d_H(x), d_H(y) \geq 4$.
  Let $H'$ be the bipartite graph obtained from $H$ by expanding $x$ and $y$   
  to $x^1 v x^2$ and $y^1 u y^2$ respectively, and adding the new edge $vu$.
  If $x$ and $y$ are not connected in $H$, the operation for obtaining $H'$ from $H$
  is called a \emph{type-3 augmentation}, otherwise it is called a \emph{type-4 augmentation}. 

\begin{figure}[h]
\centering
\subfigure[type-1 augmentation]
{
\ifpdf\input{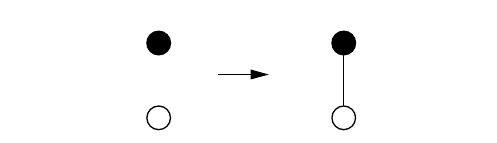_t}\else\input{type_1.ps_t}\fi
  \label{fig:type1}
}
\subfigure[type-2 augmentation]
  {\ifpdf\input{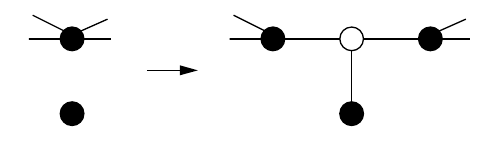_t}\else\input{type_2.ps_t}\fi
  \label{fig:type2}
  }
  \caption{Simple augmentations}
\label{fig:simple_aug}
\end{figure}

  If $H'$ is obtained from $H$ by a type $i$ augmentation for some $i \in \{1,2,3,4\}$, then we say that
  $H'$ is obtained from $H$ by an \emph{augmentation}. If $i \in \{1,2\}$, 
  then we say that $H'$ is obtained from $H$ by a \emph{simple augmentation} (see Figure~\ref{fig:simple_aug}).

Let $\mathcal{B}$ be the infinite set consisting of all bipartite 
M\"{o}bius ladders, ladders and biwheels (see Figure~\ref{fig:base_set}). 

\begin{figure}[h]
\centering
\subfigure[M\"{o}bius ladders: $M_6, M_{10}, M_{14}, M_{18}, \ldots$]
{
\ifpdf\input{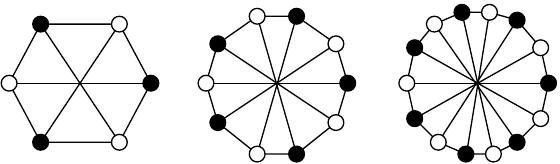_t}\else\input{mobius_ladders.ps_t}\fi
  \label{fig:mobius_ladders}
}\qquad \qquad
\subfigure[Ladders: $ L_{8}, L_{12}, L_{16}, L_{20}, \ldots $]
  {\ifpdf\input{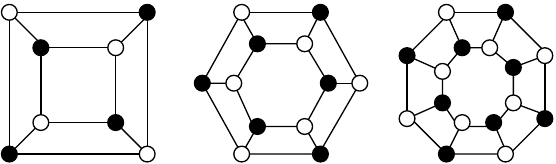_t}\else\input{ladders.ps_t}\fi
  \label{fig:ladders}
  }
  
  \subfigure[Biwheels: $ B_{10}, B_{12}, B_{14}, B_{16}, \ldots $]
  {\ifpdf\input{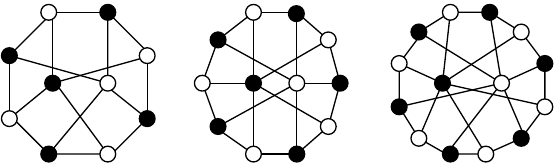_t}\else\input{biwheels.ps_t}\fi
  \label{fig:biwheels}
  }
\caption{The base set $\mathcal{B}$.}
\label{fig:base_set}
\end{figure}

\begin{theorem}[McCuaig, 1998] 
Let $H$ be a bipartite graph. Then  $H$ is a brace if and only if there exists a sequence
$H_0, H_{1}, \ldots, H_{k}$ of bipartite graphs such that $H_0 \in \mathcal{B}$, $H_{i}$
may be obtained from $H_{i-1}$ by an augmentation for each $i\in \{1, \ldots, k\}$ and $H_k=H$. 
\end{theorem}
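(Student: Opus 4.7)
The plan is to split the biconditional into its two directions. The forward implication (any graph obtainable by augmentations from $\mathcal{B}$ is a brace) will proceed by induction on the length $k$ of the sequence. The reverse implication (every brace admits such a sequence) will proceed by induction on $|V(H)|$, exhibiting a ``trimming'' (a reverse augmentation) that keeps the graph a brace.

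For the forward direction, I would first verify the base case: each M\"{o}bius ladder, each ladder, and each biwheel is a brace. Bipartiteness and the existence of a perfect matching are clear from the construction; for the brace property one exploits the high symmetry of each family and exhibits, for a representative pair of nonadjacent edges, a perfect matching through them, then transports this witness via the automorphism group. The inductive step then asks: if $H$ is a brace and $H'$ is obtained from $H$ by an augmentation of type $i \in \{1,2,3,4\}$, is $H'$ a brace? For type-1, any pair of nonadjacent edges disjoint from the new edge $ab$ inherits its witnessing matching from $H$; for pairs that include $ab$, one uses a perfect matching of $H$ through an edge adjacent to $ab$ and swaps. For type-2 (expanding $x$ to $x^1 v x^2$ and adding $vw$), a perfect matching $M$ of $H$ containing a specified pair of nonadjacent edges in $H'\setminus\{vw, x^1v, x^2v\}$ lifts to $H'$ by matching $v$ to $w$ and using the edge of $M$ at $x$ to decide which of $x^1, x^2$ becomes the partner in $H'$; pairs including one of the three new edges are handled by similar local modifications. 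Types 3 and 4 are handled by the same kind of local surgery, now at two expanded vertices.

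The reverse direction is the hard part. The plan is: given a brace $H \notin \mathcal{B}$, produce a brace $H^{-}$ with fewer vertices (or edges) such that $H$ is obtained from $H^{-}$ by an augmentation. The natural candidates are: delete an edge $e$ whose removal keeps the brace property (inverting a type-1 augmentation); or contract a degree-$3$ vertex $v$ whose two ``outer'' neighbors $x^1, x^2$ lie in the same partition class into a single vertex $x$ with $N(x) = (N(x^1) \cup N(x^2)) \setminus \{v\}$ (inverting a type-2, type-3, or type-4 augmentation, depending on the adjacency of $v$'s third neighbor and on whether $x^1, x^2$ share other neighbors). The structural heart of the argument is to show that if $H$ admits \emph{no} such trimming, then $H$ must already belong to $\mathcal{B}$.

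The main obstacle is exactly this last ``irreducibility forces base case'' step. One must show that the only braces in which every edge is essential (no type-1 trimming exists) and in which no degree-$3$ vertex has a legal contraction (no type-$\ge 2$ trimming exists) are precisely the ladders, M\"{o}bius ladders, and biwheels. I expect this will require a careful case analysis based on the degree sequence of $H$ and the matching-theoretic obstruction to each possible trimming, presumably phrased in the language of tight cuts or of ear decompositions of bipartite matching covered graphs. It is this step that explains the depth of the theorem and why the base set $\mathcal{B}$ is as rich as it is; absent a miraculous shortcut, I would expect the argument to occupy the bulk of the proof.
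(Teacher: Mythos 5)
This statement is not proved in the paper at all: it is McCuaig's brace generation theorem, quoted verbatim from the cited reference (W.~McCuaig, \emph{Brace generation}, J.~Graph Theory 38 (2001), 124--169). So there is no ``paper's own proof'' to compare against; the authors use the result as a black box. Your proposal should therefore be judged as a standalone attempt at a known, deep theorem.

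As such, it has a genuine gap. The forward direction (augmentations preserve the brace property, and the members of $\mathcal{B}$ are braces) is a reasonable and essentially correct outline, though even there the type-2/3/4 cases require care: after expanding $x$ to $x^1vx^2$, a pair of nonadjacent edges of $H'$ need not project to a pair of nonadjacent edges of $H$ (e.g.\ one edge at $x^1$ and one at $x^2$), and handling pairs involving the new path edges needs the extendability of \emph{single} edges plus an exchange argument, not just a lift. But the real problem is the reverse direction. You correctly identify that the heart of the matter is showing that a brace admitting no reduction (no removable edge inverting a type-1 augmentation, no legal contraction inverting a type-2/3/4 augmentation) must lie in $\mathcal{B}$ --- and then you simply assert that you ``expect this will require a careful case analysis.'' That step is not a detail to be deferred; it is the entire content of McCuaig's forty-plus-page paper, involving a detailed structural analysis of braces with no removable edge (via the characterization $|N(S)|\ge|S|+2$ for proper subsets $S$ of a colour class, reductions along 4-cycles, and a lengthy classification of the extremal configurations). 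Without supplying that argument, the proposal is a plan rather than a proof, and the biconditional remains unestablished in the harder direction.
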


\subsection{Main results}\label{algo}

The main results of this paper are the following.

\begin{theorem}
\label{thm.brace}
Let $G$ be a cubic graph. Then the hexagon graph $H$ of $G$ is a brace if and only if $G$ is bridgeless.
\end{theorem}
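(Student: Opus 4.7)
The plan is to use a Hall-type characterization of braces: a simple connected bipartite graph $H$ with parts $X$, $Y$ of size $n \geq 3$ and a perfect matching is a brace if and only if $|N_H(S)| \geq |S| + 2$ for every $S \subseteq X$ with $1 \leq |S| \leq n - 2$ (and symmetrically for subsets of $Y$). The ``if'' direction follows by applying Hall's theorem to $H - \{x_1,y_1,x_2,y_2\}$ for a prospective pair of nonadjacent edges $x_1y_1$, $x_2y_2$. The remaining brace hypotheses are automatic from Definition~\ref{def:heaxgraphs}: $H$ is simple, bipartite (Observation~\ref{o.bip}), has $6|V(G)|$ vertices, is connected whenever $G$ is, and the red edges form a perfect matching.

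For the forward direction, assume $G$ has a bridge $uv$ and let $V_1 \ni u$, $V_2 \ni v$ be the two sides of $G - uv$. Set $S = X \cap \bigcup_{w \in V_1} V(h_w)$. Every $Y$-vertex inside a $V_1$-hexagon trivially lies in $N_H(S)$, while a $Y$-vertex inside a $V_2$-hexagon can reach $S$ only through a white edge crossing the bridge; among the four endpoints of $e_{uv}$ and $e'_{uv}$, exactly one is a $Y$-vertex of $h_v$. Hence $|N_H(S)| = |S|+1$, and since $|V_2| \geq 1$ we have $|S| \leq n - 3$, so the Hall-2 condition fails and $H$ is not a brace.

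For the reverse direction, assume $G$ is bridgeless and hence 2-edge-connected. Let $S \subseteq X$ satisfy $1 \leq |S| \leq n-2$, and define $T = N_H(S)$, $A = \{v \in V(G) : V(h_v) \cap S \neq \emptyset\}$, and $s_v = |V(h_v) \cap S|$ for $v \in A$. The key local fact is that every $X$-vertex of a hexagon is joined by two blue edges and one red edge to the three $Y$-vertices of the same hexagon, so $v \in A$ forces $V(h_v) \cap Y \subseteq T$. Letting $E$ denote the number of $Y$-vertices of $T$ lying outside the hexagons indexed by $A$, we have $|T| = 3|A| + E$ and $|S| = \sum_{v \in A} s_v$. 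If $|T| \leq |S|+1$, then $E + \sum_{v \in A}(3 - s_v) \leq 1$, leaving only two cases: (i) $s_v = 3$ for every $v \in A$ and $E \leq 1$, or (ii) a unique $v^* \in A$ has $s_{v^*} = 2$, every other $s_v = 3$, and $E = 0$.

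The final step is to translate $E$ into the edge cut $\partial_G(A)$ in $G$. Each $X$-vertex of $S$ has a unique white-edge partner which, via the indexing $i_{v(\cdot)}$, sits in the hexagon of a specific $G$-neighbor $u$ of $v$; this partner contributes to $E$ precisely when $u \notin A$. In case (i) every $X$-vertex of every $h_v$, $v \in A$, is in $S$, so $E = |\partial_G(A)|$ and hence $|\partial_G(A)| \leq 1$. In case (ii) the hexagons $v \in A \setminus \{v^*\}$ each force all three of their $G$-neighbors into $A$ (no slack remains), and the two $S$-vertices of $h_{v^*}$ force two of $v^*$'s three neighbors into $A$, leaving at most one cut edge incident to $v^*$, so again $|\partial_G(A)| \leq 1$. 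The bounds on $|S|$ give $\emptyset \subsetneq A \subsetneq V(G)$, contradicting $2$-edge-connectivity of $G$. Hence the Hall-2 condition holds and $H$ is a brace. I expect the main obstacle to be the bookkeeping step in case (ii), where one must carefully account for the single ``slack'' $X$-vertex of $h_{v^*}$ that is not in $S$ — it is precisely this vertex that allows the cut $\partial_G(A)$ to pick up its one permitted edge, and the argument must rule out any further slack elsewhere.
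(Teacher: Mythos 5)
Your proof is correct, but it takes a genuinely different route from the paper's. The paper verifies the brace definition directly, running through all colour-types of disjoint edge pairs (blue--blue, red--red, white--white, blue--red, white--red, blue--white); the only hard case is a blue edge and a white edge meeting the same hexagon in the ``transition'' position, which is handled by taking a cycle of $G$ through the two corresponding adjacent edges (this is where bridgelessness enters) and lifting it to two disjoint cycles of $H$ whose blue and white perfect matchings extend the given pair. For the bridge direction the paper uses a parity count on the hexagons over one side of the bridge. You instead invoke the standard surplus characterization of braces ($|N_H(S)|\ge |S|+2$ for all $S\subseteq X$ with $1\le|S|\le n-2$) and convert neighbourhood deficiency in $H$ into a small edge cut in $G$: your accounting $|T|=3|A|+E$, the dichotomy between the two slack patterns, and the identification of $E$ with $|\partial_G(A)|$ (via the fact that white edges form a perfect matching and the three $X$-vertices of $h_v$ are white-matched into the three distinct neighbour hexagons) are all sound, as is the computation $|N_H(S)|=|S|+1$ for the bridge direction. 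What your approach buys is a uniform argument with no case analysis over edge colours, and it localizes the use of bridgelessness to the single fact that every nonempty proper vertex subset of a $2$-edge-connected graph has cut size at least $2$; what it costs is reliance on the surplus characterization, whose ``brace implies surplus $\ge 2$'' direction (the one you need for the bridge case) is standard but not completely immediate, so you should cite it explicitly (it is in Lov\'asz--Plummer's \emph{Matching Theory} and is used freely by McCuaig) or include the short K\H{o}nig-type argument. The paper's proof, by contrast, is self-contained and explicitly constructs the extending perfect matchings, at the price of more bookkeeping.
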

\begin{proof}
Let $B$, $W$, and $R$ denote the set of blue, white, and
red edges, respectively. Moreover,
a blue edge is denoted by $b$, a white edge by $w$, and a red edge by $r$. 
Each pair of disjoint edges, $\{b, b'\}$, $\{r,r'\}$, or $\{b,r\}$, can be simply
extended to a perfect matching of $H$. 

We note that each component of $W\cup R$ is a cycle on four vertices, a {\em square}.
Let $w, w'$ be a pair of disjoint white edges. The edges $w, w'$ belong to the same 
square of $W\cup R$, or to two different squares of $W\cup R$. 
In either case $w, w'$ can be naturally extended to a perfect matching of $H$.
Similarly,  each edge of a pair $w, r$ of disjoint white and red edges belongs to different squares of $W\cup R$, and
therefore it can be completed into a perfect matching of $H$.

Finally we consider a pair $b, w$ of disjoint white and blue edges. If the hexagon with $b$ does not contain
an end vertex of $w$, then it is not difficult to extend $b, w$ to a perfect matching of $H$. 
Hence, let $h_u$ be the hexagon that contains $b$ and an end vertex of $w$,
and let $h_v$ be the hexagon that contains the other end vertex of $w$.
Let  $b=u_{i}u_{i+1}$, $w=u_kv_j$, where $i, j, k \in \mathbb{Z}_6$.

If $k \notin \{i+3, i+4\}$, then $b, w$ can be completed into a perfect matching of $H$
that contains the edges $b, w$, and $u_{i+3}u_{i+4}$.

Hence, without loss of generality we can assume that $k=i+3$.
Let $e_{uv}= u_{i}v_{j+3}$ and $e_{uz}= u_{i+1}z_{l}$ (notation as in Definition~\ref{def:heaxgraphs}.{\emph{3}}),
where $z$ is the neighbor of $v$ in $G$ such that the white edge with an end vertex $u_{i+1}$
has an end vertex in $h_z$, and $l \in \mathbb{Z}_6$.
Given that in $G$, edges $uv, uz$ have a common end vertex $u$ represented by hexagon $h_u$, 
edge $b=u_{i}u_{i+1}$ can be seen as {\em the transition} between $uv, uz$, 
  while $u_{k}u_{k+1}$ can be seen as this transition reversed. 

Now let $G$ be bridgeless. We observe that two adjacent edges in a cubic bridgeless graph belong to a common cycle. 
Let $C$ be such a cycle for $uv, uz$.  

The two possible orientations of $C$ correspond to two disjoint cycles 
$C_b, C_w$ in $H$, where $b\in C_b$ and $w \in C_w$;
they contain the transition and transition reversed (between $uv, uz$), respectively.
Let $M_b$ be the perfect matching of $C_b$ consisting of all blue edges
and $M_w$ be the perfect matching of $C_w$ consisting of all white edges.
In particular, $b\in M_b$ and $w\in M_w$. 
Since each hexagon of $H$ is intersected by $C_b \cup C_w$ either in 
a pair of disjoint blue edges, or in the empty set, $M_b \cup M_w$ can be extended to a perfect matching
of $H$.

On the other hand, if $G$ has a bridge $e= \{u,v\}$, then let $V_1$ be the component of $G- e$ containing $u$. 
Any perfect matching of $G$ extending $b, w$ must induce
a perfect matching of $\cup_{x\in V_1}h_x\setminus \{u_{i+3}\}$, 
but this set consists of an odd number of vertices
and thus no perfect matching containing $b, w$ can exist.  
\end{proof}

\begin{theorem}\label{theo:main}
Let $G$ be a cubic bridgeless graph and $L_{8}$ denote the ladder on 8 vertices. 
There is a sequence $H_0, H_{1}, \ldots, H_{k}$ of bipartite graphs such that
$H_0 = L_{8}$, $H_{i}$ can be obtained from $H_{i-1}$  by a simple augmentation 
for each $i\in \{1, \ldots, k\}$  and $H_k$
is the hexagon graph of $G$. 
\end{theorem}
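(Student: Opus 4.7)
The plan is to proceed by induction on $n = |V(G)|$. The base case is $n = 4$, where $G = K_4$ is the unique cubic bridgeless graph on four vertices. I would construct explicitly a sequence of simple augmentations transforming $L_8$ into $H_{K_4}$: since $H_{K_4}$ is $4$-regular bipartite on $12+12$ vertices with $48$ edges and $L_8$ has $4+4$ vertices and $10$ edges, the sequence must contain exactly $8$ type-$2$ augmentations together with the $14$ type-$1$ augmentations needed to supply the remaining edges. Starting from $L_8$ identified with a suitable $8$-vertex subgraph of $H_{K_4}$, I would alternate expansions of chosen vertices of degree $\geq 4$ (type-$2$) with insertions of missing chords (type-$1$), verifying at every step that the augmentation is valid.

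For the inductive step, assume the statement for all cubic bridgeless graphs on fewer than $n$ vertices, where $n > 4$. I would appeal to the following structural reduction: there exists a cubic bridgeless graph $G'$ on $n - 2$ vertices and two edges $ab$, $cd$ of $G'$ such that $G$ arises from $G'$ by subdividing $ab$ with a new vertex $u$, subdividing $cd$ with a new vertex $v$, and adding the edge $uv$. Equivalently, $G$ contains an edge $uv$ whose ``contraction'' (delete $u$, $v$ and reconnect their remaining neighbors by two new edges) yields a cubic bridgeless graph. This is a form of ear-decomposition theorem for $2$-edge-connected cubic graphs, available whenever $n > 4$, with the bridgelessness of $G'$ maintained by a suitable choice of $uv$. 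By the inductive hypothesis $H_{G'}$ is obtainable from $L_8$ by simple augmentations, so it suffices to extend this sequence so as to reach $H_G$.

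To extend, I would build the two new hexagons $h_u$ and $h_v$ inside $H_{G'}$ by further simple augmentations. The key observation is that the four vertices of $h_a \cup h_b$ incident to the white edges $e_{ab}$, $e'_{ab}$ each have degree $4$ in $H_{G'}$; expanding them one at a time by type-$2$ augmentations splits off new vertices on the opposite bipartition side that become part of $h_u$, while the existing white edges $e_{ab}$, $e'_{ab}$ are rerouted through these new vertices automatically. An analogous construction is carried out for $c, d$ and $h_v$. The remaining vertices and edges of $h_u, h_v$ (completing the blue cycles and red matchings) and the pair $e_{uv}$, $e'_{uv}$ are then introduced by a mix of further type-$2$ expansions (of vertices now of degree $\geq 4$ inside $h_u, h_v$) and type-$1$ insertions. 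Theorem~\ref{thm.brace} guarantees that each intermediate graph remains a brace.

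The main obstacle is twofold. First, the existence of a reduction $G \to G'$ preserving bridgelessness and avoiding multi-edges or loops must be argued carefully; this is essentially the nontrivial content of the cubic ear-decomposition theorem invoked above. Second, the augmentation sequence extending $H_{G'}$ to $H_G$ must be carefully ordered: every type-$2$ step requires the expanded vertex to have current degree $\geq 4$ with a neighborhood partition into two parts of size $\geq 2$, and every added edge must respect the bipartite structure. These local constraints force a particular ordering of the operations, which must be verified by direct inspection. The base-case construction is likewise a finite but delicate verification.
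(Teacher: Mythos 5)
Your approach is genuinely different from the paper's: you induct on $|V(G)|$ by deleting an adjacent pair of vertices $u,v$ and suppressing, whereas the paper never passes through hexagon graphs of smaller cubic graphs at all. It instead takes an odd ear decomposition $(G_0,G_i,P_i)^l$ of $G$ (which exists because cubic bridgeless graphs are $1$-extendable), builds intermediate ``ear square graph'' objects $Q_i$ associated with the partial graphs $G_i$ (Definition~\ref{def:ear_square}, Lemma~\ref{theo:main_theorem}), and only at the very end converts the square graph into the hexagon graph by a ``double augmentation'' on each pair of matched squares (Lemma~\ref{theo:transition_braces}). The ears may have arbitrarily many internal degree-$2$ vertices, which is precisely what lets the paper sidestep the difficulty your reduction runs into.

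The gap in your argument is the structural reduction itself: it is false that every cubic bridgeless $G$ with $|V(G)|>4$ contains an edge $uv$ whose contraction (delete $u,v$, rejoin their remaining neighbours by two new edges) yields a \emph{simple} cubic bridgeless graph on $n-2$ vertices. Take a cyclic chain of $k\ge 2$ diamonds: each block $D_i$ is $K_4$ minus an edge with poles $p_i,q_i$ and inner vertices $a_i,b_i$ (so $a_ib_i\in E$), and the blocks are joined by the edges $q_ip_{i+1}$ cyclically. This graph is cubic and $2$-edge-connected, yet every edge fails your reduction: contracting $a_ib_i$ produces a double edge $p_iq_i$; contracting $p_ia_i$ (or any pole--inner edge) produces a second copy of an existing edge such as $b_iq_i$; contracting a connecting edge $q_ip_{i+1}$ produces a second copy of $a_ib_i$. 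So for such $G$ no admissible $G'$ exists and the induction cannot proceed. (You flag that simplicity and bridgelessness ``must be argued carefully,'' but the problem is not one of care --- the needed statement is simply not true.) A secondary, smaller issue: Theorem~\ref{thm.brace} does not show that your intermediate graphs are braces, since they are not hexagon graphs of cubic graphs; that they are braces follows instead from McCuaig's theorem applied to the augmentation sequence starting at $L_8$. Even setting both points aside, the step extending the augmentation sequence from $H_{G'}$ to $H_G$ is where essentially all of the work lies (the paper's analogue, the proof of Lemma~\ref{theo:main_theorem}.\textit{\ref{theo:main_theorem_item2}}, is a multi-page case analysis over the configurations of Proposition~\ref{lemma:intersection_of_pp}), so the proposal as written does not yet constitute a proof.
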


The crucial ingredients in the proof of Theorem~\ref{theo:main} are odd ear decompositions 
of cubic bridgeless graphs. We now give a rough sketch of the proof.
 Let $G$ be a cubic bridgeless graph, $H$ be its hexagon graph,
 and $(G_0, G_i, P_i)^l$ be an odd ear decomposition of $G$
 (see Subsection~\ref{eardecomp}).
With each intermediate subgraph $G_i$ of the odd ear decomposition of $G$ we associate
 an auxiliary graph $H'_i$. In particular, with (the cycle) 
 $G_0$ we associate the ladder $L_8$.
For each $i \in\{1,\ldots,l\}$, the auxiliary graph $H'_i$ contains the hexagons
 $h_v$ of $H$ such that $v$ has degree $3$ in $G_i$. Hence, $H'_l$ contains
 all hexagons of $H$ and indeed (by construction) it turns out to be isomorphic to $H$. 
The proof is based on the fact that for each $i \in\{1,\ldots,l\}$, it is possible to
generate $H'_i$ from $H'_{i-1}$ by a sequence of simple augmentations.

The rest of the paper is devoted to prove Theorem~\ref{theo:main}. 
The proof of Theorem~\ref{theo:main} is divided into two parts.
The first part is the generation of hexagon graphs from square graphs and
the second is the construction of square graphs from the ladder on $8$ vertices.
In Section~\ref{sec:prelim}, we introduce the concept of square graphs and 
 prove that hexagon graphs can be obtained from square graphs by a short sequence 
 of simple augmentations. Section~\ref{sec:construction} and Section~\ref{sec:proofsecondtheorem}
 focus on the construction of square graphs.

\section{Square graphs}\label{sec:prelim}

A \emph{square} is a complete bipartite graph on $4$ vertices, namely $K_{2,2}$. 
  We say that a bipartite graph has a square $s$ if it contains $s$ as a subgraph. 
Next we define \emph{square graphs}.

\begin{definition}[Square graphs]\label{def:square}
  Let $G$ be a cubic bridgeless graph with vertex set $V$ and edge set $E$.
Let $M$ be a perfect matching of $G$. An $M$-square graph of $G$ is a bipartite graph $Q$ with neither loops nor multiple edges
  satisfying the following properties:
\begin{enumerate}
\item For each vertex $v$ in $V,$ the graph $Q$ has a square $s_v$. If $u, v \in V$ are such that  
      $u \neq v$, then $s_v$ and $s_u$ are vertex disjoint subgraphs of~$Q$. 
      Moreover,  $V(Q) = \{V(v): v \in V \}$.
\item The set of edges of $Q$ is given by $$E(Q)=\{E(s_v): v \in V\} \cup \{\textbf{uv}: uv \in E\},$$ where 
      $\{\textbf{uv}: uv \in E\}$ is defined such that the following conditions hold:
      \begin{enumerate}
       \item For each edge $uv \in E$, there are edges $e_{u}$ in $E(s_{u})$ and 
	     $e_{v}$ in $E(s_{v})$ such that the subgraph of $Q$ induced
	     by the set of edges $\{e_u,e_v\} \cup \textbf{uv}$ is isomorphic to $K_{2,2}$. 
	     In particular, $|\textbf{uv}|=2$.
	     The edges $e_u$ and $e_v$ are called \emph{the supporting edges of} $\textbf{uv}$
	     in $s_u$ and $s_v$, respectively.     
      \item  Let $v \in V$ and $N_G(v)=\{u,w,z\}$. If $uv \in M$, then
            the supporting edges of $\textbf{wv}$ and $\textbf{zv}$ in $s_v$ are vertex disjoint.
         \end{enumerate}
\end{enumerate}
\end{definition}
We say that $s_v$ is the \emph{square associated} with vertex $v$ 
  and that $\{s_v: v \in V \}$ is the set of squares of~$Q$. 
  For each $uv \in E$, if $uv \in M$, then we say that $(s_u,s_v)$
  is a \emph{pair of matched squares of $Q$}. 
  Moreover, the subset of edges {{\bf\emph{uv}}} is called \emph{the projection
  of $uv$ in $Q$}.
    We usually denote by $\{v_i: i \in \mathbb{Z}_4\}$ the vertex set of the square $s_v$
      and by $\{v_iv_{i+1}: i \in \mathbb{Z}_4\}$ its edge set.  

Note that the graph obtained by contracting each square of $Q$ to a single point 
and then by deleting multiple edges is precisely $G$.
The following is a natural observation about square graphs.

\begin{observation} For every connected component $C$ of $G-M$ ($C$ is a cycle since $G$
is cubic), there exists a ladder $L$ on $4\cdot |C|$ vertices in the set of connected components 
of $Q-\{\textbf{e}: e\in M\}$ such that $v$ is a vertex of $C$ if and only if $s_v$ is a square of $L$.  
\end{observation}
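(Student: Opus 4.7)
The plan is to identify the connected component of $Q-\{\mathbf{e}: e\in M\}$ corresponding to a given cycle $C$ of $G-M$, and then to recognise it as a ladder by direct inspection.

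First, because $G$ is cubic and $M$ is a perfect matching, $G-M$ is $2$-regular and each of its components is a cycle. Fix such a component $C = v_1 v_2 \cdots v_n v_1$ with $n = |C|$, and for each $i\in\mathbb{Z}_n$ let $u_i$ denote the matching partner of $v_i$ in $G$, so $N_G(v_i) = \{v_{i-1}, v_{i+1}, u_i\}$ and $v_i u_i \in M$. Among the three projections incident to $s_{v_i}$ in $Q$, exactly one, namely the projection of $v_i u_i$, is deleted when passing to $Q-\{\mathbf{e}: e\in M\}$. It follows that the connected component $L$ of $Q-\{\mathbf{e}: e\in M\}$ containing $s_{v_1}$ is the union of the $n$ squares $s_{v_1},\ldots,s_{v_n}$ together with the $n$ projections of the edges $v_i v_{i+1}$ of $C$; this immediately gives $|V(L)|=4n=4|C|$ and both directions of the ``if and only if'' statement.

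The substance is then to check that $L$ is a ladder. The key hypothesis is clause 2(b) of Definition~\ref{def:square}: since $v_i u_i\in M$, the supporting edges in $s_{v_i}$ of the projections of $v_{i-1}v_i$ and $v_i v_{i+1}$ are vertex-disjoint, which in the $4$-cycle $s_{v_i}$ forces them to be a pair of \emph{opposite} edges. Label the vertices of $s_{v_i}$ as $a_i,b_i,c_i,d_i$ around the square so that $a_i b_i$ is the supporting edge of $v_{i-1}v_i$, $c_i d_i$ is the supporting edge of $v_i v_{i+1}$, and $\{a_i,c_i\}$, $\{b_i,d_i\}$ lie in the two parts of the global bipartition of $Q$. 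The bipartite $K_{2,2}$-structure of each projection then fixes its remaining two edges as $c_i b_{i+1}$ and $d_i a_{i+1}$. The map
\[
a_i \mapsto (2i-2,\,0),\quad b_i \mapsto (2i-2,\,1),\quad c_i \mapsto (2i-1,\,1),\quad d_i \mapsto (2i-1,\,0)
\]
is then an explicit isomorphism of $L$ onto the circular ladder on $4n$ vertices (the prism over $C_{2n}$), matching the ladder of Figure~\ref{fig:base_set}(b).

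I expect the only mild subtlety to be checking that the cyclic labels $a_i,b_i,c_i,d_i$ propagate consistently around $C$ and close up at $s_{v_1}$ without a twist; this follows because each projection step preserves the bipartition pattern (with $a,c$ in one class and $b,d$ in the other), so no M\"obius-type identification is created. Everything else is local and amounts to collecting edges.
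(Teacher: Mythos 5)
Your proof is correct; the paper states this as a ``natural observation'' and gives no proof at all, and your argument supplies exactly the intended justification: condition 2(b) of Definition~\ref{def:square} forces the two supporting edges of the unmatched projections to be opposite edges of each square, the $K_{2,2}$ condition together with bipartiteness pins down the two projection edges, and bipartiteness of $Q$ rules out a M\"obius-type closure, yielding the prism over $C_{2|C|}$, i.e.\ the ladder on $4|C|$ vertices.
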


In Lemma~\ref{theo:transition_braces}, we state that hexagon graphs
  can be generated from square graphs using simple augmentations. 

\begin{lemma}\label{theo:transition_braces}
  Let $G$ be a cubic bridgeless graph, $M$ be a perfect matching of $G$ and
  $Q$ be an $M$-square graph of $G$.
  Then there is a sequence of bipartite graphs $H_0,H_1,\ldots, H_l$ such that
  $H_0=Q$, $H_{i}$ may be obtained from $H_{i-1}$ by a simple augmentation for each $i\in \{1, \ldots, l\}$ 
  and $H_l$ is the hexagon graph of $G$. 
\end{lemma}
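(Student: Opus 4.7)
My plan is to transform $Q$ into $H$ by iterating over the squares of $Q$, converting each $K_{2,2}$ into the corresponding $K_{3,3}$ by a short local sequence of simple augmentations. The arithmetic forces the overall scheme: since $|V(H)|-|V(Q)|=2|V(G)|$ and a type-2 augmentation contributes $+2$ vertices while a type-1 contributes none, we must use exactly one type-2 per hexagon; and since $|E(H)|-|E(Q)|=5|V(G)|$, with $+3$ edges per type-2 and $+1$ per type-1, we must use two type-1 augmentations per hexagon, for a total of $l=3|V(G)|$ augmentations.

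For a fixed $v$ with $N_G(v)=\{u,w,z\}$ and $uv\in M$, the six projection-edge endpoints incident to $s_v$ are distributed on its four vertices in the pattern $2+2+1+1$, so exactly two vertices of $s_v$ have degree $4$ in $Q$. I would pick one such vertex $x$ and apply a type-2 augmentation that splits $N_Q(x)$ into parts $N_1,N_2$ of size $2$ each, placing one square-neighbour and one projection-neighbour of $x$ into each part. The middle vertex $m$ of the expansion is then joined, via the new edge $mw$, to a vertex $w$ of $s_v$ in the same bipartite class as $x$ (specifically, one of the two degree-$3$ vertices of $s_v$). The split distributes $x$'s two projection edges, one to each of $x^1,x^2$, and together with $m$ this contributes the two new vertices of $h_v$ (one net from the split, one from the middle). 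Two subsequent type-1 augmentations within $s_v$ then add the two hexagon-edges still missing from the $K_{3,3}$ structure on these six vertices, for instance one remaining blue edge of the six-cycle and one red diagonal.

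The main challenge is verifying that the sequence, applied to all squares, yields a graph isomorphic to $H$ as prescribed by Definition~\ref{def:heaxgraphs}. The delicate point is the second degree-$4$ vertex $y$ of $s_v$, which I chose not to split: in $Q$ it carries two projection edges, yet every vertex of $H$ has exactly one external edge. I expect this tension to resolve once the analogous type-2 step is carried out on the matched square $s_u$ (with $uv\in M$): a split-result from $s_u$ is then placed, under the final isomorphism, in $h_v$, so that one of $y$'s projection edges is reinterpreted as an internal hexagon edge. By Observation~\ref{o.iso} the indexing $i_{v(u)},i_{v(w)},i_{v(z)}$ of each hexagon may be chosen freely, and exploiting this freedom to produce a coherent global choice of partitions $(N_1,N_2)$ and endpoints $w$ across all squares, thereby exhibiting the isomorphism, is what I expect to be the most substantive step of the proof.
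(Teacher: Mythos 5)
Your overall strategy---one type-2 and two type-1 augmentations per square, the two new vertices of each expansion supplying the hexagon's two extra vertices, and the surplus at the second degree-$4$ vertex resolved by an exchange between the two squares of a matched pair---is exactly the strategy of the paper, which packages the six simple augmentations for a matched pair $(s_u,s_v)$ into a single ``double augmentation''. Your counting and your identification of the $2+2+1+1$ distribution of projection edges (Proposition~\ref{lemma:degrees}) are correct, and so is your instinct that a split product of $s_u$ must end up inside $h_v$.

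The gap is that the concrete local recipe you write down contradicts that instinct and would not produce the hexagon graph. Suppose you expand a degree-$4$ vertex $x$ of $s_v$ with one square-neighbour and one projection-neighbour in each part, join the middle vertex $m$ to a degree-$3$ vertex of $s_v$, and declare $h_v$ to be the four original vertices of $s_v$ together with the two split products; then after your two type-1 augmentations complete the $K_{3,3}$ on these six vertices, $m$ has degree $3$ (three hexagon edges, no external edge) while the unsplit degree-$4$ vertex $y$ has degree $5$ (three hexagon edges plus its two surviving projection edges), whereas $H$ is $4$-regular. The crossover you invoke can only repair this if one of the split products of $s_v$ \emph{leaves} for $h_u$ and a split product of $s_u$ takes its place---but then your list of the six vertices of $h_v$, and hence the two ``missing'' edges you add by type-1 augmentations, are wrong. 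In the paper's double augmentation the partition is different: one part keeps \emph{both} square-neighbours of the expanded vertex (plus one projection edge) and stays in $h_v$; the other part takes the projection edge towards $s_u$ together with a cross edge between $s_u$ and $s_v$ that is added \emph{before} the expansion (which is also what raises the degrees enough to make the expansions legal), and migrates to $h_u$; and the middle vertex acquires its fourth edge from a later type-1 augmentation towards the expansion of the matched square. Consequently the operation has to be defined on the matched pair, not square by square, and one must still check the three possible configurations of the supporting edges of the projection $\textbf{uv}$ (Figure~\ref{fig:configurations_matchpairs}). So the ``most substantive step'' you defer is where the proof actually lives, and the local recipe you propose as its starting point would have to be replaced rather than merely completed.
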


\begin{proof} We first describe an operation composed of a sequence of 
  simple augmentations which we apply to each pair of matched squares 
  in order to generate a pair of hexagon-neighbors; 
  we shall call this operation  a \emph{double augmentation.} 
Let $(s_u,s_v)$ be a pair of matched squares of $Q$. By definition, 
  all distinct configurations of the supporting edges of \textbf{\emph{uv}}
  in $s_u$ and $s_v$, respectively, are the ones depicted in Figure~\ref{fig:configurations_matchpairs}.   
\begin{figure}[h]
  \centering 
    \subfigure[]
  { 
    \ifpdf\input{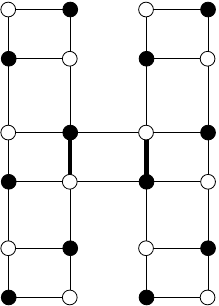_t}\else\input{config_1.ps_t}\fi
    \label{fig:config_matchpairs_1}
  } \qquad
    \subfigure[]
  { 
    \ifpdf\input{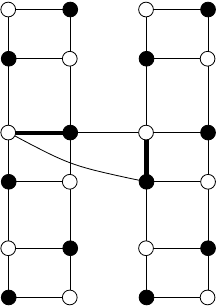_t}\else\input{config_2.ps_t}\fi
    \label{fig:config_matchpairs_2}
  } \qquad
     \subfigure[]
  { 
    \ifpdf\input{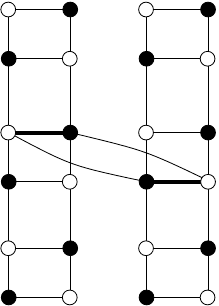_t}\else\input{config_3.ps_t}\fi
    \label{fig:config_matchpairs_3}
  } 
  \caption{Possible locations of the supporting edges of \textbf{\emph{uv}}
  in $s_u$ and $s_v$ for a pair $(s_u,s_v)$ of matched squares of $Q$. Supporting edges are depicted by thick lines.}
  \label{fig:configurations_matchpairs}
  \end{figure}

We assume that the supporting edges of \textbf{\emph{uv}} for the pair $(s_u,s_v)$ are 
  configured as in Figure~\ref{fig:config_matchpairs_1}.
  Consider the vertex labeling depicted in Figure~\ref{fig:double_aug_initial}. 
Next, we describe the aforementioned operation with input the pair $(s_u,s_v)$.

\noindent
\emph{Double augmentation on $(s_u,s_v)$:} (see Figure~\ref{fig:double_augmentation}) 
  [step 0:] addition of the two new edges 
  $u_1v_0$ and $u_2v_3$. [step 1:] expansion of $v_0$  to $v^1_0vv^2_0$
  in such a way that the partition associated with $v^2_0$ is $\{u_1,u_3\}$ 
  and with $v^1_0$ is $\{v_1,v_3,z_1\}$ and addition of the new edge $vv_2$. 
  [step 2:] addition of the new edge
  $vu_2$. [step 3:] expansion of $u_2$ to $u^1_2uu^2_2$ in such a way that
  the partition associated with $u^1_2$ is $\{u_1, x_2, u_3\}$
  and with $u^2_2$ is $\{v_1, v, v_3\}$ and addition of the new edge
  $uv^2_0$. [step 4:] addition of the new edge $uu_0$.
  We observe that in steps 1 and 3 respectively, expansion of $v_0$ and expansion of $u_2$ respectively are allowed given 
  that the degrees are $5$ and $6$ respectively; recall that degree at least 4 is required for expansion; 
  see Subsection~\ref{subsec:braces}.
  
In case that the supporting edges of \textbf{\emph{uv}} for the pair $(s_u,s_v)$
are configured as in Figure~\ref{fig:config_matchpairs_2} or as in
  Figure~\ref{fig:config_matchpairs_3} respectively (set the same vertex labeling), 
  if we replace the edges added at the step 0 of the double augmentation described above by $u_2v_3, u_3v_0$ and 
  $u_2v_1, u_3v_0 $ respectively,
  then the local configuration obtained is the one depicted in Figure~\ref{fig:double_aug_0}. 
Therefore, if we continue applying steps 1, 2, 3 and 4 as before we obtain the local configuration depicted in 
 Figure~\ref{fig:double_aug_4}. 

\begin{figure}[h]
  \centering 
   \subfigure[Initial configuration]
  { 
    \ifpdf\input{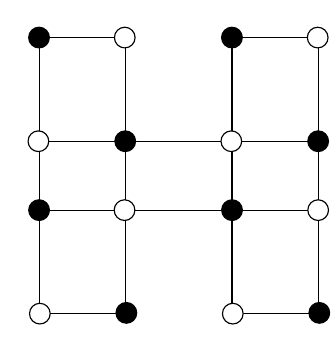_t}\else\input{pair_rect_0.ps_t}\fi
    \label{fig:double_aug_initial}
  }\qquad
    \subfigure[step 0]
  { 
    \ifpdf\input{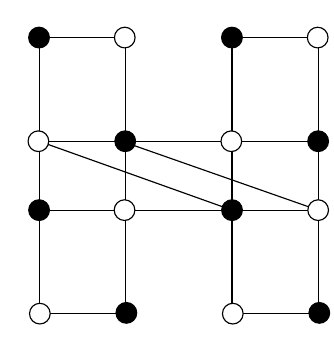_t}\else\input{pair_rect.ps_t}\fi
    \label{fig:double_aug_0}
  }\qquad 
    \subfigure[step 1]
  { 
    \ifpdf\input{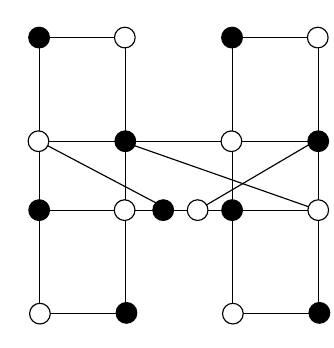_t}\else\input{double_aug.ps_t}\fi
    \label{fig:double_aug_1}
  } 
  
    \subfigure[step 2]
  { 
    \ifpdf\input{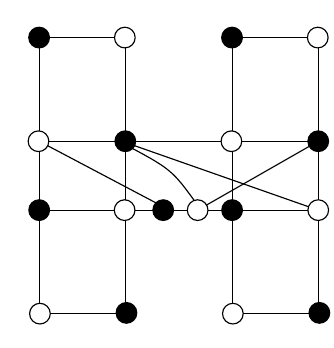_t}\else\input{double_aug_2.ps_t}\fi
    \label{fig:double_aug_2}
  } \qquad
    \subfigure[step 3]
  { 
    \ifpdf\input{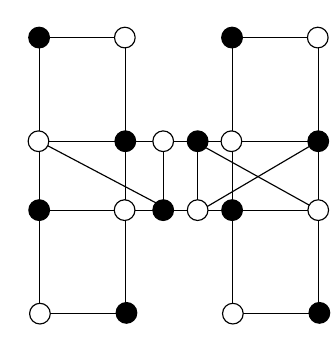_t}\else\input{double_aug_3.ps_t}\fi
    \label{fig:double_aug_3}
  } \qquad
     \subfigure[step 4]
  { 
    \ifpdf\input{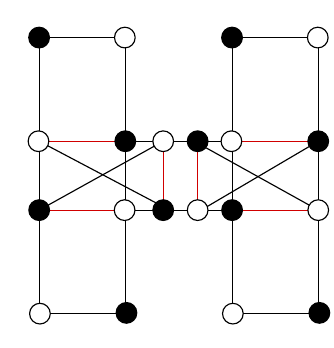_t}\else\input{double_aug_4.ps_t}\fi
    \label{fig:double_aug_4}
  } 
  \caption{Double Augmentation on $(s_u,s_v)$. In subfigure~(f), red edges are depicted by red lines.}
  \label{fig:double_augmentation}
  \end{figure}

We claim that the graph obtained from $Q$ by performing a double augmentation on every pair of 
  matched squares is a hexagon graph of $G$.
The disjoint subsets of vertices $\{u_0,u_3,u_1,u^1_2,u,v^2_0\}$ 
  and $\{v_1,v_2,v_3,v^1_0,v,u^2_2\}$ induce hexagons. Let $h_{u}$ and $h_{v}$ denote them respectively.
The claim follows by setting $\{u_0u_3,u_1u^1_2,uv^2_0\}$ and $\{v_1v_2,v_3v^1_0,vu^2_2\}$ 
 to be the subsets of red edges in $h_{u}$ 
  and $h_{v}$, respectively (see Figure~\ref{fig:double_aug_4}).  
  
To conclude, since steps 0, 2 and 4 correspond to type-1 augmentations, and steps 1 and 3 correspond to
  type-2 augmentations, we have that a double augmentation on a pair of matching related squares
  is composed of a sequence of simple augmentations.
\end{proof}

\section{Construction of square graphs} \label{sec:construction}

In order to prove Theorem~\ref{theo:main}, by Lemma~\ref{theo:transition_braces} it suffices to show that 
  we can construct an $M$-square graph of $G$, for some perfect matching $M$ of $G$, from the ladder
  on $8$ vertices using simple augmentations. 
In this section we develop a method to construct square graphs 
  following an ear decomposition of the underlying cubic bridgeless graph 
  $G$ and using simple augmentations.

\subsection{Odd ear decomposition of a cubic bridgeless graph} \label{eardecomp}

Let $G$ be a graph. We say that a path, or a cycle of $G$, 
  is \emph{even} (\emph{odd} respectively) if it has an even (odd respectively) number of edges.
An \emph{odd ear decomposition} of $G$, denoted by $(G_0, G_i, P_i)^l$, 
  consists of a sequence of subgraphs $G_0, G_1, \ldots, G_l$ and a sequence of odd paths $P_1, \ldots, P_l$
  of $G$ such that $G_0$ is an even cycle of~$G$, $G_l = G$ and for each $i \in \{1, \ldots,l\}$ 
  the subgraph $G_{i}$ is obtained from $G_{i-1}$ joining two vertices $\alpha_i$ and $\beta_i$ in $V(G_{i-1})$ 
  by a path $P_i$, where $P_i$ is such that $V(P_i) \cap V(G_{i-1}) = \{\alpha_i, \beta_i\}$ 
   and $E(P_i) \cap E(G_{i-1})=\emptyset$. 
It is folklore that every edge of a cubic bridgeless graph is contained in a perfect
matching and hence, the class of cubic bridgeless graph is a subclass of the class of 1-extendable graphs.
In addition, every 1-extendable graph admits an odd ear decomposition~\cite[\S 5.4]{B:lovasz-plummer}.  

Let $G$ be a cubic bridgeless graph and $(G_0, G_i, P_i)^l$ be an odd ear decomposition of $G$.
 We say that a perfect matching $M$ of $G$ is \emph{absolute} in $(G_0, G_i, P_i)^l$ if the restriction
 of $M$ to $E(G_i)$ is a perfect matching of $G_i$ for every $i\in\{0,1,\ldots,l\}$. The next observation is 
 straightforward.
 
 \begin{observation}
For every odd ear decomposition $(G_0, G_i, P_i)^l$ of a cubic bridgeless graph $G$,
there exists a perfect matching $M$ of $G$ that is absolute in $(G_0, G_i, P_i)^l$.
 \end{observation}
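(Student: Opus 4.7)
The plan is to build the matching inductively along the ear decomposition. I would construct a nested sequence of perfect matchings $M_0 \subseteq M_1 \subseteq \cdots \subseteq M_l$ such that $M_i$ is a perfect matching of $G_i$, and then take $M := M_l$. The key point making absoluteness automatic is that the new matching edges added at step $i$ all lie strictly inside the interior of the ear $P_i$, so they contribute no edges to $E(G_j)$ for any $j<i$; consequently $M\cap E(G_i)=M_i$, which is exactly the absoluteness condition.

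For the base case, $G_0$ is an even cycle by the definition of an odd ear decomposition, so it admits a perfect matching $M_0$ (either of the two alternating sets of edges works). For the inductive step, assume $M_{i-1}$ is a perfect matching of $G_{i-1}$. The ear $P_i$ is an odd path with endpoints $\alpha_i,\beta_i\in V(G_{i-1})$, and since $V(P_i)\cap V(G_{i-1})=\{\alpha_i,\beta_i\}$, the internal vertices of $P_i$ are brand new and therefore unmatched by $M_{i-1}$. Because $P_i$ has odd length, the subpath of $P_i$ obtained by deleting $\alpha_i$ and $\beta_i$ has an even number of vertices and is itself a path; thus it possesses a (unique) perfect matching $N_i$. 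Setting $M_i := M_{i-1}\cup N_i$ yields a perfect matching of $G_i$: every vertex of $G_{i-1}$ is covered by $M_{i-1}$ (in particular $\alpha_i,\beta_i$), and every new internal vertex is covered by $N_i$.

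Finally, $M := M_l$ is a perfect matching of $G=G_l$. To check absoluteness, fix $i\in\{0,\ldots,l\}$. By construction $M_i\subseteq M$ and $M_i\subseteq E(G_i)$, so $M_i\subseteq M\cap E(G_i)$. Conversely, any edge in $M\setminus M_i$ lies in some $N_j$ with $j>i$, hence inside the interior of $P_j$, which is vertex-disjoint from $V(G_i)$ except possibly at $\alpha_j,\beta_j$; since $N_j$ uses only internal vertices of $P_j$, such an edge is not in $E(G_i)$. Therefore $M\cap E(G_i)=M_i$ is a perfect matching of $G_i$, proving that $M$ is absolute in $(G_0,G_i,P_i)^l$.

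I do not anticipate any real obstacle: the only point worth double-checking is the parity bookkeeping, namely that an odd path of length $2k+1$ contributes exactly $2k$ new internal vertices so that the nested extension is forced and canonical. Nothing in the argument uses cubicity or bridgelessness of $G$ beyond what is already guaranteed by the existence of the odd ear decomposition, which is why the observation is indeed immediate.
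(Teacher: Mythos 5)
Your proof is correct and is precisely the standard inductive argument the paper leaves implicit (it states the observation without proof as ``straightforward''): match the even cycle $G_0$, then match the even interior of each odd ear, and note that later ears contribute no edges to earlier $E(G_i)$. The parity bookkeeping and the absoluteness check are both handled properly, so nothing further is needed.
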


In the rest of the paper, we deal only with perfect matchings that are absolute in a given odd ear decomposition
$(G_0, G_i, P_i)^l$.
Let $i \in \{1,\ldots,l\}$ and let $V_j(G_i)$ denote the subset of vertices of $V(G_i)$ 
  that have degree $j$ in $G_i$ for each $j \in \{2,3\}$. 
  Let $u,v \in V_3(G_i)$ and $P$ be a path of $G_i$
  with end vertices $u, v$ such that $V(P) \cap V_3(G_i) = \{u,v\}$. In other words,
  every inner vertex of $P$ belongs to $V_2(G_i)$. 
  We say that $P$ is a $(u,v)$-path of $G_i$ and usually denote $P$ by $p(u,v)$. 
  Note that there may exist multiple $(u,v)$-paths.
  We shall denote by $\mathcal{P}(G_i)$ the set of all $(u,v)$-paths for all $u, v$ in $V_3(G_i)$.

We note that if $v$ is a vertex in $V_3(G_i)$, 
	    then there are three (not necessarily distinct) vertices $x,y,z$ in $\mathcal{V}(G_i)$,  
	    such that $p(x,v),$ $p(y,v),$ $p(z,v) \in \mathcal{P}(G_i)$.
	    We say that the set $\{x, y, z\}$ is the \emph{set of pseudo-neighbors} of $v$ in $G_i$.
	    
Observe that if $M$ is a perfect matching of $G$ and $vw \in M$, then there is a 
	    unique path $P \in \{p(x,v),\, p(y,v),\, p(z,v)\}$ such that $vw \in E(P)$.
	    We refer to $P$ as the \emph{matching-path} of $v$ in $G_i$ (with respect to $M$).
	    If $vw$ is not in $E(P)$, then $P$ is called a \emph{cycle-path} of $v$ in $G_i$.
	  Note that a path $p(u,v)$ in $\mathcal{P}(G_i)$ could be both, a matching-path of $v$ and a cycle-path of $u$.
However, since $M$ is a perfect matching that is absolute in $(G_0, G_i, P_i)^l$, the path
$P_i = p(\alpha_i, \beta_i) \in \mathcal{P}(G_i)$ is always a cycle-path of both $\alpha_i$ and $\beta_i$ in $G_i$
 (see Figure~\ref{fig:pro_paths_1}).

In Subsection~\ref{subsec:ear-square_graphs}, we generalize the definition of square graphs of a cubic
      graph $G$ to the intermediate graphs $G_0, G_1, \ldots, G_l$ associated with an odd ear decomposition
      of $G$.

\subsection{Ear square graphs}\label{subsec:ear-square_graphs} 

In this section and in the rest of the paper, $G$ is a cubic bridgeless graph, $(G_0, G_i, P_i)^l$ is an odd ear decomposition of $G$ 
 and $M$ is a perfect matching of $G$ that is absolute in $(G_0, G_i, P_i)^l$.

 \begin{definition}[Ear square graphs]\label{def:ear_square}
For each $i \in \{1,\ldots,l\}$, a \emph{($G_i,M)$-ear square graph} is a bipartite graph $Q_i$ 
with neither loops nor multiple edges that 
satisfies the following properties:  
\begin{enumerate}
\item For each vertex $v$ in $V_3(G_i)$, 
      the graph $Q_i$ has a square $s_v$. For every $u$,$v$ in $V_3(G_i)$
      with $u \neq v$,  the squares $s_v$ and $s_u$ are vertex disjoint subgraphs of $Q_i$. 
      Moreover, $V(Q_i) = \{V(s_v): v \in V_3(G_i)\}$.
\item The set of edges of $Q_i$ is given by $$\{E(s_v): v \in V_3(G_i)\} \dot{\bigcup_{p(u,v)\in \mathcal{P}(G_i)}} 
      {{\bf{p}}(u,v)} $$ 
      where $\{\pp(u,v): p(u,v)\in \mathcal{P}(G_i)\}$ is defined such that the following conditions hold:
      \begin{enumerate}
      \item For each $p(u,v)\in \mathcal{P}(G_i)$, we have $|\pp(u,v)|=2$, and there are edges $e_u$ in $E(s_{u})$, 
	    $e_v$ in $E(s_{v})$ such that the subgraph of $Q_i$ induced
	    by the set of edges $\{e_u,e_v\} \cup \pp(u,v)$ is isomorphic to $K_{2,2}$.
	    The edges $e_u$ and $e_v$ are called \emph{the supporting edges of $\pp(u,v)$}
	    in $s_u$ and $s_v$, respectively.
     \item Let $v$ be a vertex in $V_3(G_i)$ and $\{x,y,z\}$ be its set of pseudo-neighbors.  
	    If $p(x,v)$ is the matching-path of $v$ in $G_i$,
	    then the supporting edges of $\pp(v,y)$ and $\pp(v,z)$ in $s_v$ 
	    are vertex disjoint (see Figure~\ref{fig:ear_square_v}).
     \item Elements in $\{\pp(u,v): p(u,v)\in \mathcal{P}(G_i)\}$ are pairwise disjoint.
    \end{enumerate}
\end{enumerate}
\end{definition}

\begin{figure}[h]
  \centering 
    \subfigure[]
  { 
    \ifpdf\input{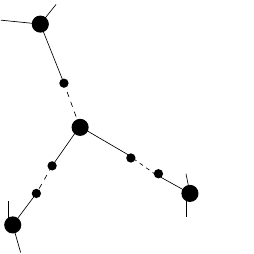_t}\else\input{projected_paths.ps_t}\fi
    \label{fig:proj_0}
  } 
    \subfigure[]
  { 
    \ifpdf\input{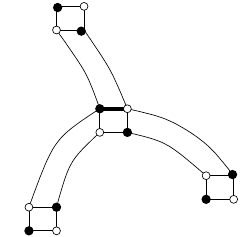_t}\else\input{proyected_paths_1.ps_t}\fi
    \label{fig:proj_1}
  } 
      \subfigure[]
  { 
    \ifpdf\input{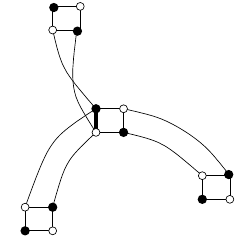_t}\else\input{proyected_paths_2.ps_t}\fi
    \label{fig:proj_2}
  } 
      \subfigure[]
  { 
    \ifpdf\input{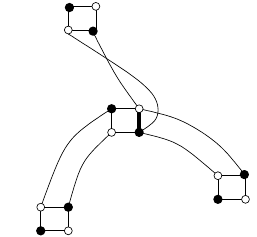_t}\else\input{proyected_paths_3.ps_t}\fi
    \label{fig:proj_3}
  } 
      \subfigure[]
  { 
    \ifpdf\input{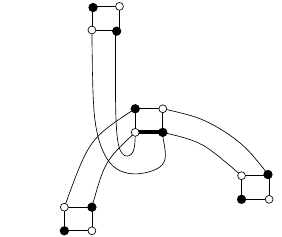_t}\else\input{proyected_paths_4.ps_t}\fi
    \label{fig:proj_4}
  } 
  \caption{Local representation of $G_i$ and a $(G_i,M)$-ear square graph of $G_i$. 
  In subfigure~(a), we depict a vertex $v \in V_3(G_i)$ with $x, y, z \in V_3(G_i)$ its pseudo-neighbors
  and $p(v,x)$ the matching-path of $v$ in $G_i$. Dashed edges represent edges from $M$.
  In subfigures (b)-(e), we depict all the allowed locations of the 
  supporting edge of $\pp(v,x)$ in $s_v$ in a $(G_i,M)$-ear square graph of $G_i$.
  In each subfigure the supporting edge is depicted by a thicker line.}
  \label{fig:ear_square_v}
  \end{figure}


  For every $p(u,v) \in \mathcal{P}(G_i)$, the set $\pp(u,v)$ 
  is said to be its \emph{projected $(u,v)$-path in $Q_i$}.
If $p(u,v)$ is the matching-path of $v$ in $G_i$, we say 
    that $\pp(u,v)$ is the \emph{projected matching-path of $s_{v}$ in $Q_i$}.

Since $V_3(G_l)= V(G)$, the following proposition follows from  Definition~\ref{def:square} and Definition~\ref{def:ear_square}.

\begin{observation} \label{prop:ear_squares} 
A graph $H$ is a $(G_l,M)$-ear square graph if and only if $H$ is an $M$-square graph.
\end{observation}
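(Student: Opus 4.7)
The plan is to verify this by unpacking both definitions at the level $i=l$ and exhibiting a canonical identification between the data defining an $M$-square graph of $G$ and that defining a $(G_l,M)$-ear square graph of $G$. Since $G_l=G$ is cubic, every vertex of $G_l$ has degree $3$, so $V_3(G_l)=V(G)$ and $V_2(G_l)=\emptyset$.

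First I would establish a canonical bijection between $\mathcal{P}(G_l)$ and $E(G)$. Because $V_2(G_l)=\emptyset$, every $p(u,v)\in\mathcal{P}(G_l)$ has no internal vertices and is therefore a single edge $uv\in E(G)$; conversely each edge of $G$ joins two vertices of $V_3(G_l)$ and so is a $(u,v)$-path with no inner vertex. Under this identification, the set of pseudo-neighbors of a vertex $v\in V_3(G_l)$ coincides with $N_G(v)$. Moreover, since $M$ is absolute in $(G_0,G_i,P_i)^l$ and $G_l=G$, the matching-path of $v$ in $G_l$ corresponds exactly to the unique edge of $M$ incident with $v$.

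Next I would check the defining clauses one by one. Clause~1 of both definitions specifies a vertex-disjoint square $s_v$ for each $v\in V(G)$ and $V(Q)=\bigcup_v V(s_v)$; these are identical after replacing $V_3(G_l)$ by $V(G)$. For the edge sets, the union $\bigcup_{p(u,v)\in\mathcal{P}(G_l)}\pp(u,v)$ becomes $\bigcup_{uv\in E(G)}\pp(u,v)$, which matches the set $\{\textbf{\emph{uv}}:uv\in E\}$ of Definition~\ref{def:square} once we rename $\pp(u,v)$ as $\textbf{\emph{uv}}$. The $K_{2,2}$ condition with supporting edges $e_u,e_v$ in clause~2(a) is stated verbatim in both definitions, so it transfers without change; the requirement in the ear-square definition that the sets $\pp(u,v)$ be pairwise disjoint (clause~2(c)) is automatic in the square-graph setting because of the simplicity hypothesis on $Q$ together with clause~2(a), and is imposed here explicitly—so in either case the two families of edges are the same. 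Finally, clause~2(b) of the ear-square definition says that for $v$ with pseudo-neighbors $\{x,y,z\}$ and matching-path $p(x,v)$, the supporting edges of $\pp(v,y)$ and $\pp(v,z)$ in $s_v$ are vertex disjoint; under the bijection above this reads exactly as Definition~\ref{def:square}'s clause~2(b), namely: if $N_G(v)=\{u,w,z\}$ and $uv\in M$, then the supporting edges of $\textbf{\emph{wv}}$ and $\textbf{\emph{zv}}$ in $s_v$ are vertex disjoint.

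There is no real obstacle here; the statement is a dictionary check and the only point deserving attention is verifying that the matching-path in $G_l$ really is the edge of $M$ incident with $v$, which uses that $M$ is absolute in the ear decomposition so that $M$ restricted to $E(G_l)=E(G)$ is a perfect matching. With this identification established, both sets of axioms become literally the same set of conditions, which proves the equivalence in both directions.
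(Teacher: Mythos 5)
Your proposal is correct and matches the paper's approach: the paper offers no written proof beyond noting that $V_3(G_l)=V(G)$ and that the claim then follows from Definitions~\ref{def:square} and~\ref{def:ear_square}, and your clause-by-clause dictionary check (paths in $\mathcal{P}(G_l)$ are edges of $G$, pseudo-neighbors are neighbors, the matching-path of $v$ is its $M$-edge) is exactly that verification made explicit.
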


In Lemma~\ref{theo:main_theorem} we formalize the construction 
  of square graphs using ear square graphs and simple augmentations.  
This lemma is proved in Section~\ref{sec:proofsecondtheorem}.

\begin{lemma}[Construction of square graphs]\label{theo:main_theorem}
  Let $G$ be a cubic bridgeless graph, $(G_0, G_i, P_i)^l$ be an odd ear decomposition of $G$ 
 and $M$ be a perfect matching of $G$ that is absolute in $(G_0, G_i, P_i)^l$.
  Let $L_{8}$ denote the ladder on $8$ vertices (see Figure~\ref{fig:ladders}).
  The following two properties hold.
  \begin{enumerate}
    \item \label{theo:main_theorem_item1}  A $(G_1,M)$-ear square graph $Q_1$ can be generated from 
      $L_8$ using type-1 augmentations.
    \item \label{theo:main_theorem_item2} Let $i\in\{2,\ldots,l\}$ and $Q_{i-1}$ be a $(G_{i-1},M)$-ear square graph.
    Then a $(G_i,M)$-ear square graph $Q_i$ can be generated from $Q_{i-1}$ using a sequence of simple augmentations.  
   \end{enumerate}
\end{lemma}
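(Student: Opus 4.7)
My plan is to treat the two items of the lemma separately, starting with an explicit description of the base case and then developing a general inductive step.

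For item \ref{theo:main_theorem_item1}, I will write $Q_1$ down explicitly. Since $G_0$ is an even cycle and $P_1$ is an odd ear with endpoints $\alpha_1,\beta_1\in V(G_0)$, every vertex of $G_1$ other than $\alpha_1,\beta_1$ has degree $2$, so $V_3(G_1)=\{\alpha_1,\beta_1\}$. Consequently, $Q_1$ consists of precisely the two squares $s_{\alpha_1}$ and $s_{\beta_1}$ together with three projected $(\alpha_1,\beta_1)$-paths, one for each element of $\mathcal{P}(G_1)$, namely the two arcs of $G_0$ between $\alpha_1$ and $\beta_1$ and the ear $P_1$ itself. I will then exhibit a concrete spanning subgraph of $Q_1$ isomorphic to $L_8$, formed by the square edges together with two of the three projected paths whose supporting-edge placements are the ones forced by condition~2(b) of Definition~\ref{def:ear_square} for the absolute matching $M$; the remaining edges (those of the third projected path) are then added one at a time by type-1 augmentations, each of which visibly preserves bipartiteness.

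For item \ref{theo:main_theorem_item2}, the central observation is that $\alpha_i,\beta_i\in V_2(G_{i-1})$ -- otherwise $G=G_l$ would contain a vertex of degree exceeding $3$ -- so in $G_{i-1}$ they sit as interior vertices of paths $p(u,v), p(u',v')\in\mathcal{P}(G_{i-1})$ whose projections $\pp(u,v), \pp(u',v')$ already live in $Q_{i-1}$. Passing from $Q_{i-1}$ to $Q_i$ requires: (a) creating the two new squares $s_{\alpha_i}, s_{\beta_i}$; (b) replacing the old projections $\pp(u,v), \pp(u',v')$ by the four shorter projections $\pp(u,\alpha_i), \pp(\alpha_i,v), \pp(u',\beta_i), \pp(\beta_i,v')$; and (c) inserting the new projection $\pp(\alpha_i,\beta_i)$ corresponding to the ear $P_i$. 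I will realize (a) and (b) by a pair of type-2 expansions, one at each end: each expansion splits an endpoint of a supporting edge of $\pp(u,v)$ (respectively $\pp(u',v')$) and, combined with a handful of type-1 augmentations, carves out the new square with the four supporting edges placed exactly as dictated by condition~2(b) and by the matching-path versus cycle-path roles forced on the pieces of $p(u,v), p(u',v'), P_i$ by $M$. Step (c) is then a short sequence of type-1 and type-2 augmentations between the two newly created squares, in the spirit of the double augmentation of the proof of Lemma~\ref{theo:transition_braces}.

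The main obstacle is the case analysis required when the ear interacts with $Q_{i-1}$ in a degenerate way: when $p(u,v)=p(u',v')$, so $\alpha_i$ and $\beta_i$ lie on a common path of $G_{i-1}$; when $u=v$, so the path is a loop based at a single vertex of $V_3(G_{i-1})$; or when some piece, say $p(u,\alpha_i)$, has length $1$, so the corresponding projection has no interior to expand. In each sub-case, the generic sequence of augmentations sketched above may either break bipartiteness or require expanding a vertex of degree $<4$, which is forbidden; the remedy is to reorder the simple augmentations, or to pick a different legal placement of the supporting edges using the latitude allowed by Definition~\ref{def:ear_square}. Once this case analysis is complete, verifying that the resulting graph is a $(G_i,M)$-ear square graph reduces to routinely checking, projection by projection, that the supporting-edge and matching-path conditions of Definition~\ref{def:ear_square} hold at the two new squares.
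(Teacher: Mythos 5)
Your overall strategy does coincide with the paper's: for part~\textit{\ref{theo:main_theorem_item1}} one indeed has $V_3(G_1)=\{\alpha_1,\beta_1\}$, the two squares together with two of the three projected $(\alpha_1,\beta_1)$-paths already form an $L_8$, and the third projected path is added by two type-1 augmentations (the paper does exactly this, with a small case split on which of the three paths are matching-paths); for part~\textit{\ref{theo:main_theorem_item2}} the paper likewise builds $s_{\alpha_i}$ and $s_{\beta_i}$ locally by expanding vertices of the squares that carry the supporting edges of the two old projections and then adding the five new projections.

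However, the inductive step as you describe it has concrete gaps. First, a pair of type-2 expansions, together with any number of type-1 augmentations, creates only four new vertices, whereas $Q_i$ has eight more vertices than $Q_{i-1}$; one needs (at least) four expansions, two per new square, and every one of them must be applied to a vertex of degree at least $4$. Guaranteeing this is the real difficulty: the paper controls it via Proposition~\ref{lemma:degrees} (each square has exactly two degree-4 corners) and, in the hardest cases, via separate subcases depending on whether the endpoints of the relevant supporting edges have degree 3 or 4. Second, your list of degenerate cases is off target: the length of a piece such as $p(u,\alpha_i)$ is irrelevant, since a projection always consists of exactly two edges joining two squares and nothing along the path is ever subdivided; and $u=v$ cannot occur because each $G_{i-1}$ is 2-edge-connected, so a $(v,v)$-path would force a bridge. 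The case analysis that actually must be carried out is over the nine mutual configurations of $\pp(x,y)$ and $\pp(w,z)$ (how many squares they share and how their supporting edges intersect, Proposition~\ref{lemma:intersection_of_pp}) crossed with the four placements of the matching-paths at $\alpha_i$ and $\beta_i$, and for each one an explicit augmentation sequence must be exhibited and condition 2(b) of Definition~\ref{def:ear_square} verified at the two new squares. Your proposal asserts that reordering augmentations or re-placing supporting edges always resolves the problematic cases, but that assertion is essentially the statement to be proved and is not argued.
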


Note that Lemma \ref{theo:main_theorem} along with Observation~\ref{prop:ear_squares} and Lemma~\ref{theo:transition_braces}  
imply Theorem~\ref{theo:main}.

\section{Proof of Lemma~\ref{theo:main_theorem}}\label{sec:proofsecondtheorem}

In this section, $G$ is a cubic bridgeless graph, 
$(G_0, G_i, P_i)^l$ is an odd ear decomposition of $G$ 
 and $M$ is a perfect matching of $G$ that is absolute in $(G_0, G_i, P_i)^l$.
  Let $L_{8}$ denote the ladder on $8$ vertices. 
Moreover, for each $i\in \{1,\ldots, l\}$, let $Q_i$ denote a $(G_i,M)$-ear square graph.

In what follows we enunciate two natural properties about ear square graphs. 
The result of Proposition~\ref{lemma:degrees} follows directly 
  from Definition~\ref{def:ear_square}.
  
\begin{proposition}\label{lemma:degrees} For every $i\in \{1,\ldots, l\}$, 
  each square $s_{v}$ in $Q_i$ with $V(s_{v})= \{v_j: j \in \mathbb{Z}_{4}\}$ 
  is such that there exists a unique $j \in \mathbb{Z}_{4}$ such that $d_{v_{j}} = d_{v_{j+1}} = 4$
  and $d_{v_{j+2}} = d_{v_{j+3}} = 3$.
\end{proposition}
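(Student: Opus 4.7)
The plan is to compute the degree of each vertex of a square $s_v$ in $Q_i$ directly, by splitting its degree into (i) the contribution from edges internal to $s_v$, and (ii) the contribution from the projected paths that attach to $s_v$. Since each square is a $K_{2,2}$, every vertex $v_k$ of $s_v$ contributes exactly $2$ to $d_{v_k}$ through internal edges. All remaining degree at $v_k$ comes from edges of projected $(u,v)$-paths in $\mathcal{P}(G_i)$ whose supporting edge in $s_v$ is incident to $v_k$. So the proof reduces to a careful accounting of how the supporting edges of the projected paths sit inside $s_v$.

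I would invoke Definition~\ref{def:ear_square} to identify three projected paths incident to $s_v$, corresponding to the three pseudo-neighbors of $v$ in $G_i$: one projected matching-path and two projected cycle-paths. By condition~(a), each projected path has $|\pp(u,v)| = 2$ external edges, both attached to the two endpoints of its supporting edge in $s_v$, so every supporting edge contributes exactly $+1$ to the degree of each of its two endpoints and nothing to the remaining two vertices of $s_v$.

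The decisive step is condition~(b) of Definition~\ref{def:ear_square}: the two supporting edges of the projected cycle-paths are vertex disjoint in $s_v$. As $s_v$ has only four vertices, these two disjoint edges together cover all of $V(s_v)$, and therefore every vertex of $s_v$ picks up exactly $+1$ from cycle-path contributions. The supporting edge of the projected matching-path of $s_v$ is then some edge of the square, say $v_jv_{j+1}$ (after cyclic relabeling), and it gives an additional $+1$ to $v_j$ and $v_{j+1}$ but nothing to $v_{j+2}$ or $v_{j+3}$. Adding the internal contribution of $2$, we obtain $d_{v_j} = d_{v_{j+1}} = 2 + 1 + 1 = 4$ and $d_{v_{j+2}} = d_{v_{j+3}} = 2 + 1 + 0 = 3$, as claimed. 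Uniqueness of $j \in \mathbb{Z}_4$ is immediate, since $j$ is determined by the location of the (unique) projected matching-path supporting edge in $s_v$.

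The main thing to be careful about is condition~(c) of Definition~\ref{def:ear_square}, which guarantees that the three projected paths at $s_v$ contribute edges that are pairwise disjoint, so the $+1$ contributions really add and no edge is double-counted. Aside from this, the argument is essentially a finite case check inside the four-vertex graph $K_{2,2}$ and should pose no real obstacle.
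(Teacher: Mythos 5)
Your proof is correct and is exactly the degree count the paper has in mind: the paper offers no written proof, stating only that the proposition ``follows directly from Definition~\ref{def:ear_square},'' and your accounting (2 internal edges, $+1$ per endpoint of each supporting edge, disjointness of the two cycle-path supporting edges via condition~(b), no double counting via condition~(c)) is the intended justification spelled out.
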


\begin{proposition}\label{lemma:intersection_of_pp} Let $p(x,y)$ and $p(w,z)$ be paths
in $\mathcal{P}(G_i)$. Let $\pp(x,y)$ be the projected path of $p(x,y)$
and $\pp(w,z)$ be the projected path of $p(w,z)$ in $Q_i$. 
Then the subgraph $S$ of $Q_i$ with a set of 
  edges $ \pp(x,y) \cup \pp(w,z) \cup E(s_x \cup s_y \cup s_w \cup s_z)$
  is isomorphic to one of the 9 graphs (configurations) depicted in Figure~\ref{fig:intersection_of_pp}.
\end{proposition}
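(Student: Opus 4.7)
The plan is to carry out a systematic case analysis based on two parameters: first the overlap $k = |\{x,y\} \cap \{w,z\}| \in \{0,1,2\}$, and second, for each vertex in this overlap, which (if any) of the incident projected paths is the projected matching-path at that square. The only structural constraints we have on $Q_i$ are Definition~\ref{def:ear_square}(2)(a)--(c), namely that each projected path is a $K_{2,2}$-attachment anchored by a single supporting edge in each of its two squares, that two non-matching supporting edges at a common vertex must be vertex-disjoint, and that distinct projected paths are pairwise disjoint. Combined with Proposition~\ref{lemma:degrees}, which pins down the degree pattern inside each square, these restrict the possibilities dramatically.

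When $k = 0$, the four squares $s_x, s_y, s_w, s_z$ are pairwise vertex-disjoint and the two projected paths share nothing, so $S$ is the edge-disjoint union of two $K_{2,2}$-attachments between disjoint pairs of squares; edge-transitivity of $\mathrm{Aut}(K_{2,2})$ makes this one configuration. When $k = 1$, say $y = w$, three squares appear and both projected paths meet at $s_y$. Condition (2)(b) at $y$ splits this into sub-cases depending on whether $p(x,y)$, $p(y,z)$, or neither is the matching-path of $y$: in the last sub-case the supporting edges of $\pp(x,y)$ and $\pp(y,z)$ in $s_y$ are forced to be opposite edges of $s_y$, while in the first two they may share a vertex. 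When $k = 2$, either $\{x,y\} = \{w,z\}$ with $x \neq y$ (two distinct projected paths between the same pair of squares), or an endpoint degenerates so that $s_x = s_y$ is a single square carrying both ends of $\pp(x,y)$; in either sub-case, apply (2)(b) at each shared endpoint to list the surviving configurations.

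Collecting the configurations obtained across the three cases and quotienting by the obvious symmetries (the automorphisms of each square, swapping the two projected paths, and swapping the endpoints of each path) should yield exactly the nine graphs pictured in Figure~\ref{fig:intersection_of_pp}. The main obstacle will be the bookkeeping: being exhaustive without overcounting. A useful sanity check is that in every sub-case the only meaningful data are which supporting edge in each relevant square is used, and whether two supporting edges sitting at a common vertex share a vertex inside that square or are opposite edges of it; the second item is precisely what condition (2)(b) controls. Once the count in each of the three cases is verified against the figure, the proposition follows.
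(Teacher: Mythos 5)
Your overall strategy coincides with the paper's: split on $|\{x,y,w,z\}|\in\{4,3,2\}$ (your $k=0,1,2$) and then, within each case, classify by the relative position of the supporting edges inside the shared squares. Two concrete bookkeeping points, however, are missing from your sketch and are exactly what is needed to land on \emph{nine} configurations rather than some other number. First, your ``meaningful data'' dichotomy --- two supporting edges at a common square either share a vertex or are opposite edges --- omits the third possibility that they \emph{coincide}. That case is not vacuous: it is precisely what distinguishes Configurations~3 and~8 from Configurations~2 and~5/7 in Figure~\ref{fig:intersection_of_pp}, and the paper's proof lists it explicitly (its sub-cases a.2 and b.4). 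Relatedly, condition (2)(b) does not drive the classification the way you suggest: the proposition asks for all subgraphs $S$ that \emph{can} occur, and in the one-shared-endpoint case all three relative positions (one common vertex, equal, disjoint) occur across the various matching sub-cases, so organizing by which path is the matching-path neither shrinks nor cleanly partitions the list --- the geometric relation of the supporting edges is the right invariant.

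Second, you never invoke the hypothesis that $Q_i$ has no multiple edges (this sits in the preamble of Definition~\ref{def:ear_square}, not in items (2)(a)--(c) which you cite). In the case $x=w$, $y=z$ with $p(x,y)\neq p(w,z)$, the combination ``$e_x$ and $e_w$ share at least one vertex in $s_x$ \emph{and} $e_y=e_z$ in $s_y$'' is combinatorially consistent with everything you list, but it forces a doubled edge between $s_x$ and $s_y$ (a common vertex of $e_x\cap e_w$ would be joined twice to the same vertex of $e_y=e_z$, since the bipartition determines which end of a supporting edge each projected edge attaches to). The paper excludes this explicitly (its sub-case b.5); without that exclusion your enumeration would output more than the nine pictured graphs. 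Neither point requires a new idea, but both must appear for the case analysis to close.
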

\begin{figure}[h]
  \centering
    \subfigure[Configuration 1]
{ 
  \ifpdf\input{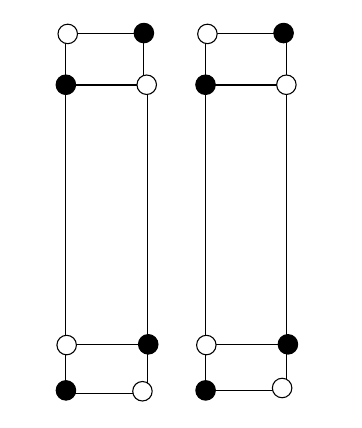_t}\else\input{case1.ps_t}\fi
  \label{case1}
} 
\subfigure[Configuration 2]
{
  \ifpdf\input{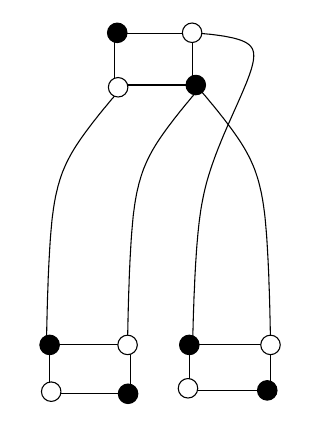_t}\else\input{case3.ps_t}\fi
  \label{case2}
}
\subfigure[Configuration 3]
{
  \ifpdf\input{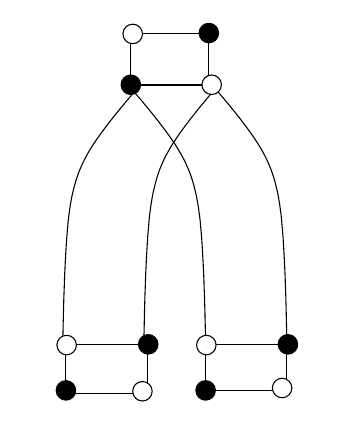_t}\else\input{case2.ps_t}\fi
  \label{case3}
}
\subfigure[Configuration 4]
{
  \ifpdf\input{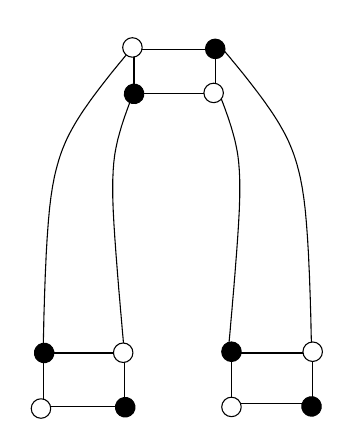_t}\else\input{case3_1.ps_t}\fi
  \label{case4}
}
\subfigure[Configuration 5]
{
  \ifpdf\input{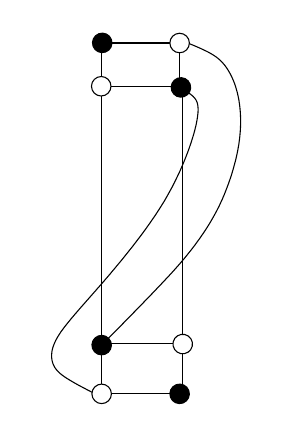_t}\else\input{case4.ps_t}\fi
  \label{case5}}
    \subfigure[Configuration 6]
{
  \ifpdf\input{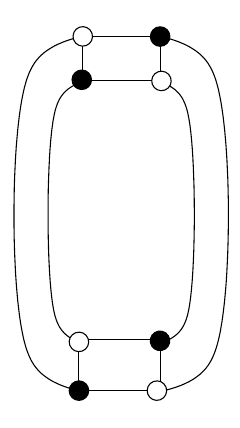_t}\else\input{case4_1.ps_t}\fi
  \label{case6}
}
\subfigure[Configuration 7]
{
  \ifpdf\input{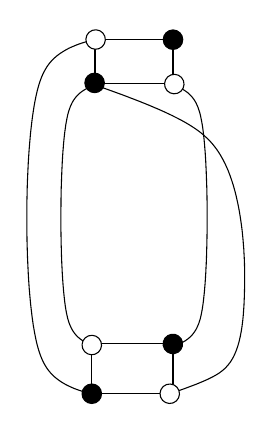_t}\else\input{case4_2.ps_t}\fi
  \label{case7}
}
\subfigure[Configuration 8]
{
  \ifpdf\input{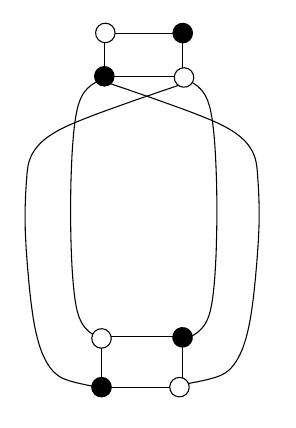_t}\else\input{case4_3.ps_t}\fi
  \label{case8}
}
\subfigure[Configuration 9]
{
  \ifpdf\input{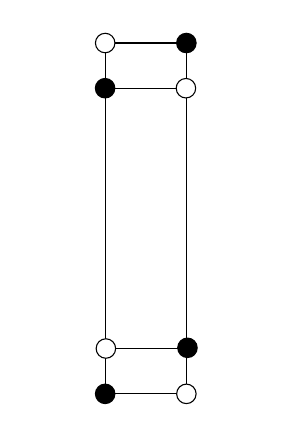_t}\else\input{case5.ps_t}\fi
  \label{case9}
}
\caption{In (a) is depicted the unique 
subgraph that arises when vertices $x, y, z, w \in V_3(G_i)$ are all distinct.
From (b) to (d) the three possible subgraphs that arise when 
$|\{s_x,s_y,s_w,s_z\}|=3$. Figures from (e) to (i) depict all the possible 	situations when $|\{s_x,s_y,s_w,s_z\}|=2$.}
    \label{fig:intersection_of_pp}
\end{figure}

\begin{proof}
We first suppose that $|\{x,y,w,z\}|=4$. Then $x, y, z, w \in V_3(G_i)$ are all distinct
and the squares $s_x$, $s_y$, $s_w$, $s_z$ in $Q_i$ are vertex disjoint.
Therefore, $S$ is isomorphic to the graph depicted in Figure~\ref{case1}. 

We now suppose that $|\{x,y,w,z\}|=3$. It means that the paths $p(x,y)$ and $p(w,y)$ have one
common end vertex. Without loss of generality we suppose that $x=w$, and then, 
$s_x$, $s_y$ and $s_z$ are vertex disjoint. 
In the subgraph $S$ three distinct situations depending on the location
of the supporting edges $e_{x}$ and $e_{w}$ of $\pp(x,y)$ and $\pp(w,z)$ in $s_x$ can arise:
\begin{enumerate}
 \item[a.1)] either $|e_{x} \cap e_{w}| =1$, or
 \item[a.2)] $e_{x} = e_{w}$, or
 \item[a.3)] $e_{x} \cap e_{w}= \emptyset$.
\end{enumerate}
If situation a.1) holds, then 
$S$ is isomorphic to configuration~2, see Figure~\ref{case2}.   
If situation a.2) holds, then 
$S$ is isomorphic to configuration~3, see Figure~\ref{case3},   
and if situation a.3) holds, then 
$S$ is isomorphic to configuration~4, see Figure~\ref{case4}.

We finally suppose that $|\{x,y,w,z\}|=2$. Without loss of generality we assume that $x=w$ and $y=z$.
If $p(x,y) = p(w,z)$, then $S$ is isomorphic to the graph depicted in~Figure~\ref{case9},
this graph is called configuration~9.
Otherwise, in the graph $S$ several distinct situations depending on the location of the supporting
edges $e_x$ and $e_w$ of $\pp(x,y)$ and $\pp(w,z)$ in $s_x$ 
and of the supporting edges $e_y$ and $e_z$ of $\pp(x,y)$ and $\pp(w,z)$ in $s_y$
may arise:

\begin{enumerate}
 \item[b.1)] either $|e_x \cap e_w| =1$ and $|e_y \cap e_z| =1$, or
 \item[b.2)] $e_x \cap e_w = \emptyset$ and $e_y \cap e_z = \emptyset$, or
 \item[b.3)] $|e_x \cap e_w| =1$ and $e_y \cap e_z = \emptyset$, or
  \item[b.4)]$e_x = e_w$ and $e_y \cap e_z =\emptyset$, or 
 \item[b.5)] $|e_x \cap e_w| =1$ and $e_y = e_z$, or $e_x = e_w$ and $e_y = e_z$.
\end{enumerate}

If situation b.1), b.2), b.3) or b.4) holds, then $S$ is isomorphic
 to configuration~5, 6, 7, or 8, respectively. Those configurations are depicted 
 in Figure~\ref{fig:intersection_of_pp}).
Situations described in b.5) do not occur in $Q_i$ given that $Q_i$ 
does not have multiple edges. We clarify the last statement 
in the following paragraph.

If we suppose that $|e_x \cap e_w| =1$ or $e_x = e_w$ 
then, there exists a vertex, without loss of generality we assume that such a vertex is $x_1 \in e_x \cap e_y$
such that $x_1y_1$ and $x_1z_1$ are edges of $S$. 
We recall that $Q_i$ does not have multiple edges. Since $e_y = e_z$, we have $y_1=z_1$ and $S$ has a double edge, 
a contradiction.
\end{proof}

\subsection{Generating ear square graphs}

This section is devoted to prove Lemma~\ref{theo:main_theorem}.

\subsubsection*{Proof of Lemma~\ref{theo:main_theorem}, part \textit{\ref{theo:main_theorem_item1}}}

We need to prove that we can generate a $(G_1,M)$-ear square graph from $L_8$ using type-1 augmentations (addition of new edges).
We consider the vertex-labeling of $L_8$ depicted in Figure~\ref{fig:start_squares_ladder}.
Let $p(u,v)$, $p(x,y)$ and $p(w,z)$ be the only three paths in $\mathcal{P}(G_1)$,
where $u=x=w$, $v=y=z$ and $P_1=p(u,v)$. We have that $G_1$ satisfies one of the following properties: 

\begin{itemize}
 \item[1)] either $p(x,y)$ is the matching-path of $v$ and of $u$ in $G_1$, or
 \item[1$'$)] $p(w,z)$ is the matching-path of $v$ and of $u$ in $G_1$, or
 \item[2)] $p(x,y)$ is the matching-path of $v$ in $G_1$ and $p(w,z)$ is the matching-path of $u$ in $G_1$, or
  \item[2$'$)] $p(w,z)$ is the matching-path of $v$ in $G_1$ and $p(x,y)$ is the matching-path of $u$ in $G_1$.
\end{itemize}

By symmetry, it suffices to prove that for each $i\in\{1,2\}$, we can generate from $L_8$  a 
  $(G_1,M)$-ear square graph,
  where $G_1$ and $M$ satisfies i). We first claim that if 1) holds, then the bipartite graph obtained 
  from $L_8$ by adding the new edges $v_0u_2$ and $v_3u_1$ is a $(G_1,M)$-ear square graph 
  (see Figure~\ref{fig:start_squares}). The validity of this claims follows from considering 
  $\{v_0u_2, v_3u_1\}$ to be the projected path of $p(x,y)$,
  $\{v_2u_2, v_3u_3\}$ to be the projected path of $p(w,z)$ and 
  $\{v_0u_0,v_1u_1\}$ to be the projected path of $p(u,v)$.

Secondly, we claim that if 2) holds, then the bipartite graph obtained 
  from $L_8$ by adding the new edges $v_1u_3$ and $v_0u_2$ is a $(G_1,M)$-ear square graph
  (see Figure~\ref{fig:start_squares_c}).
In this case, if we let $\{v_0u_2, v_3u_1\}$ be the projected path of $p(u,v)$,
  $\{v_2u_2, v_3u_3\}$ be the projected path of $p(w,z)$ and 
  $\{v_0u_0,v_1u_1\}$ be the projected path of $p(x,y)$, then the claim follows.

\begin{figure}[h] 
 \centering 
     \subfigure[A ladder $L_8$ on $8$ vertices]
{ 
  \ifpdf\input{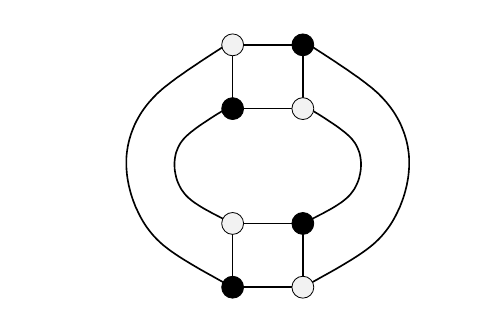_t}\else\input{starting_rectangles_ladder.ps_t}\fi
  \label{fig:start_squares_ladder}
} 
    \subfigure[$G_1$-ear square graph]
{ 
  \ifpdf\input{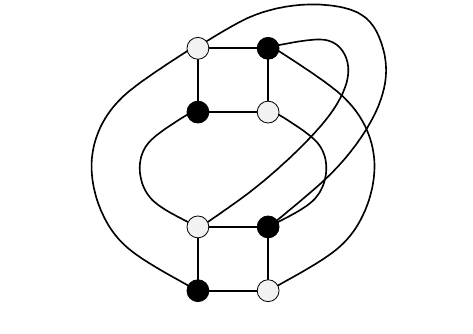_t}\else\input{starting_rectangles.ps_t}\fi
  \label{fig:start_squares}
} 
    \subfigure[$G_1$-ear square graph]
{ 
  \ifpdf\input{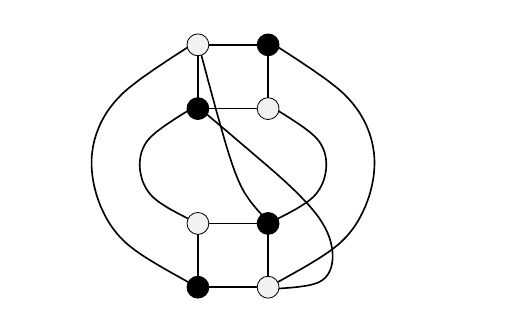_t}\else\input{starting_rectangles_1.ps_t}\fi
  \label{fig:start_squares_c}
} 
 \caption{Generation of $Q_1$ from $L_8$.}
\label{fig:start}
\end{figure}

\subsubsection*{Proof of Lemma~\ref{theo:main_theorem}, part \textit{\ref{theo:main_theorem_item2}}}

For each $i\in \{2,\ldots,l\}$, we need to show that from a 
  $(G_{i-1},M)$-ear square graph we can generate a  
  $(G_{i},M)$-ear square graph using simple augmentations.
For this purpose, the idea is to make local changes; 
we basically replace the projected paths in $Q_{i-1}$
of the paths that contain $\alpha_i$ and $\beta_i$ 
by two new squares $s_{\alpha_i}$, $s_{\beta_i}$, and by the new projected paths incident with them.
Moreover, we modify neither any square in $Q_{i-1}$, nor the position of the supporting edges
of the projected paths incident with them (see Figure~\ref{fig:pro_paths}).
Here, $\alpha_i$, $\beta_i$ denote the end vertices of the path $P_i$ from $(G_0, G_i, P_i)^l$.

Let $\pp(x,y)$ and $\pp(w,z)$ be the projected paths in $Q_{i-1}$ 
  such that $\alpha_i$ belongs to $V(p(x,y))$ in $G_{i-1}$ and 
  $\beta_i$ belongs to $V(p(w,z))$ in $G_{i-1}$.

\begin{figure}[h] 
 \centering 
  \subfigure[$P_i=:\alpha_i \cdots \beta_i$ is a
  cycle-path of $\alpha_i$ and $\beta_i$ in $G_i$. Paths 
  $p(x,y)$ and $p(w,z)$ contain $\alpha_i$ and $\beta_i$ in $G_{i-1}$.]
{ 
  \ifpdf\input{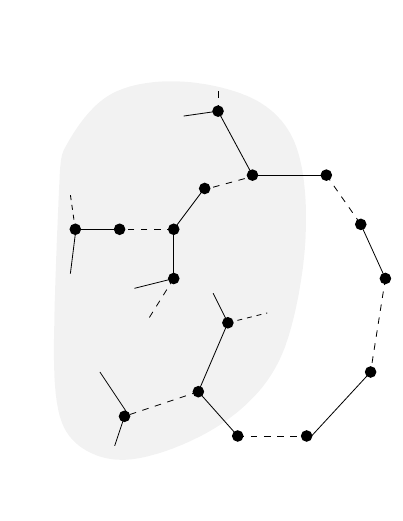_t}\else\input{projected_paths_1.ps_t}\fi
  \label{fig:pro_paths_1}
} \qquad
 \subfigure[$\pp(x,y)$ and $\pp(w,z)$ are the projected paths 
 of $p(x,y)$ and $p(w,z)$ in $Q_{i-1}$.]
{ 
  \ifpdf\input{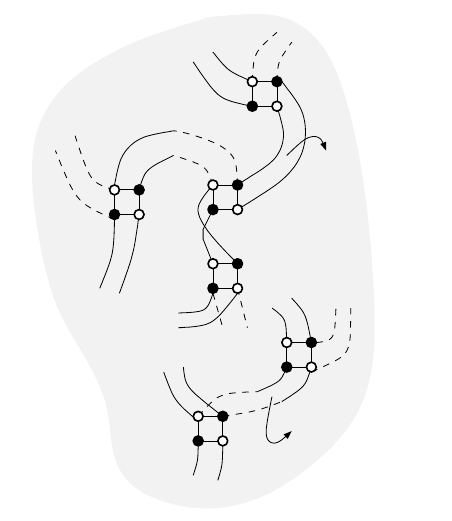_t}\else\input{projected_paths_2.ps_t}\fi
  \label{fig:pro_paths_2}
} \qquad
 \subfigure[$(G_i,M)$-ear square graph. Squares $s_{\alpha_i}$ and $s_{\beta_i}$
 are constructed and also the projected paths incident with them.]
{ 
  \ifpdf\input{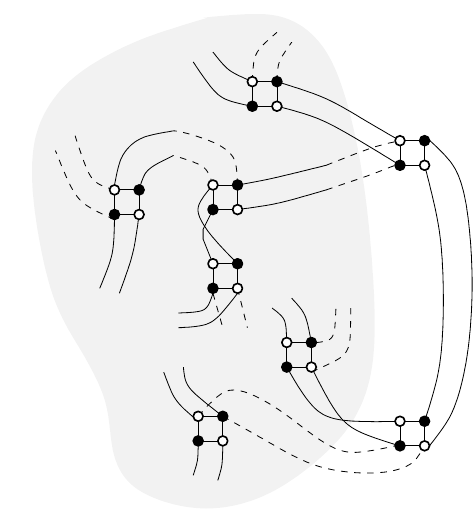_t}\else\input{p-p-3.ps_t}\fi
  \label{fig:pro_paths_3}
} 
 \caption{$\pp(x,y)$ and $\pp(w,z)$ are the projected paths in $Q_{i-1}$ 
  such that $\alpha_i \in V(p(x,y))$ and 
  $\beta_i \in V(p(w,z))$ in $G_{i-1}$. In (a), dashed edges represent the perfect matching
  $M$ that is absolute in $(G_0, G_i, P_i)^l$. In (b)-(c), dashed lines represent projected matching-paths.}
 \label{fig:pro_paths}
\end{figure}

In what follows, for the sake of simplicity we set $u=\alpha_i$ and $v=\beta_i$.  
We attempt to generate the two new squares $s_{u}$ and $s_{v}$ and the projected paths 
$\pp(u,x), \pp(u,y), \pp(u,v), \pp(v,z)$ and $\pp(v,w)$.
In order to cover all cases we need to take care of two issues, first the interaction 
between the projected paths $\pp(x,y)$ and $\pp(w,z)$ in $Q_{i-1}$, which is described
by Proposition~\ref{lemma:intersection_of_pp} and depicted in Figure~\ref{fig:intersection_of_pp},
and the second issue is the location of the perfect matching $M$ with respect to the edges
and paths incident with $u$ and $v$. For the second issue, we know that $P_i$ is a cycle-path (
recall that since $M$ is a perfect matching that is absolute in $(G_0, G_i, P_i)^l$, the path
$P_i = p(\alpha_i, \beta_i) \in \mathcal{P}(G_i)$ is always a cycle-path of both $u$ and $v$ in $G_i$ --- see Figure~\ref{fig:pro_paths_1}), and therefore
if $p(x,y) \neq p(w,z)$, then the matching-path of $u$ is either $p(u,y)$ or $p(u,x)$ and 
the matching-path of $v$ is either $p(v,w)$ or $p(v,z)$. In Figure~\ref{fig:cases_matching-related_1}, 
we describe the cases and depict examples for the situation that $x=w$ and $y \neq z$.
The remaining cases, for example when all $x,w,y,z$ are different, are analogous.

\begin{figure}[h]
 \centering 
  \subfigure[instance i): $p(u,y)$ matching-path of $u$ and
  $p(v,z)$ matching-path of $v$.]
{ 
  \ifpdf\input{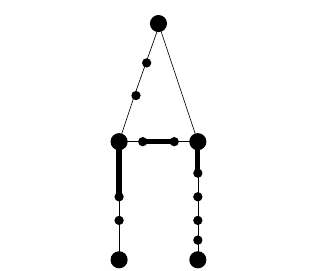_t}\else\input{case1_matchrelated_c.ps_t}\fi
  \label{fig:match_related_1}
} \quad
  \subfigure[instance ii): $p(u,x)$ matching-path of $u$ and
  $p(v,z)$ matching-path of $v$.]
{ 
  \ifpdf\input{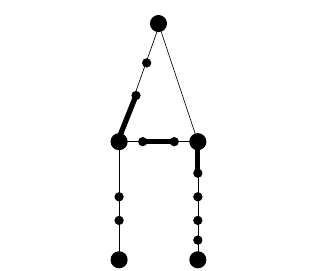_t}\else\input{case2_matchrelated_c.ps_t}\fi
  \label{fig:match_related_2}
} \quad
  \subfigure[instance iii): $p(u,y)$ matching-path of $u$ and
  $p(v,w)$ matching-path of $v$.]
{ 
  \ifpdf\input{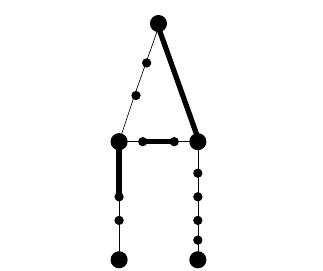_t}\else\input{case3_matchrelated_c.ps_t}\fi
  \label{fig:match_related_3}
}  \quad
  \subfigure[instance iv): $p(u,x)$ matching-path of $u$ and
  $p(v,w)$ matching-path of $v$.]
{ 
  \ifpdf\input{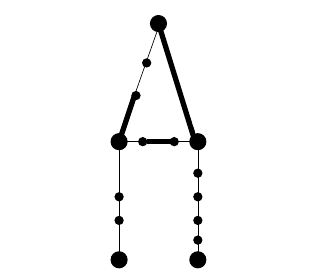_t}\else\input{case4_matchrelated_c.ps_t}\fi
  \label{fig:match_related_4}
} 
 \caption{In the subfigures we depicted an example of each instance when the paths $p(x,y)$ and $p(w,z)$
	  in $G_{i-1}$ intersect in one vertex (see Configurations 2,3 and 4 in Figure~\ref{fig:intersection_of_pp}). 
          Bold edges represent the perfect matching. Moreover, $p(u,v)=P_i$.}
 \label{fig:cases_matching-related_1}
\end{figure}

In the case that $p(x,y)=p(w,z)$, without loss of generality we can assume that $u$ and $v$ are
placed (with respect to $x$ and $y$) as depicted in Figure~\ref{fig:cases_config9}. Then, we have that the matching-path of $u$ 
is either $p(u,x)$ or $p(u,v)$ (with $p(u,v) \neq P_i$) and 
the matching-path of $v$ is either $p(v,y)$ or $p(u,v)$ (with $p(u,v) \neq P_i$). 
In Figure~\ref{fig:cases_config9}, we describe these situations with a corresponding example.

\begin{figure}[h]
 \centering 
  \subfigure[instance i$'$): $p(u,x)$~matching-path of $u$ and
  $p(v,y)$ matching-path of $v$.]
{ 
  \ifpdf\input{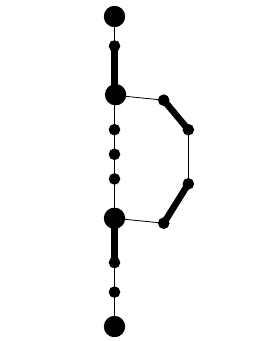_t}\else\input{config9_i.ps_t}\fi
  \label{fig:match_related_1}
} \qquad 
  \subfigure[instance ii$'$): $p(u,x)$ matching-path of $u$ and
  $p(u,v)$ matching-path of $v$.
  ]
{ 
  \ifpdf\input{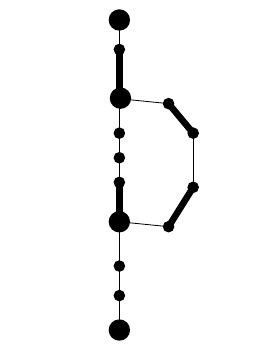_t}\else\input{config9_ii.ps_t}\fi
  \label{fig:match_related_2}
} \qquad 
  \subfigure[instance iii$'$): $p(u,y)$ matching-path of $u$ and
  $p(v,w)$ matching-path of $v$. ]
{ 
  \ifpdf\input{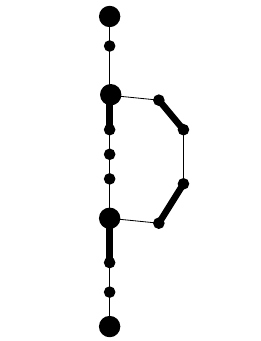_t}\else\input{config9_iii.ps_t}\fi
  \label{fig:match_related_3}
}  \qquad
  \subfigure[instance iv$'$): $p(u,v)$ matching-path of $u$ and $v$.]
{ 
  \ifpdf\input{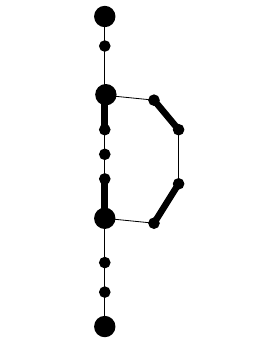_t}\else\input{config9_iv.ps_t}\fi
  \label{fig:match_related_4}
} 
 \caption{In each figure is depicted an example of the distinct instances in the case that $p(x,y) = p(w,z)$ (see configuration 9 
	  in Figure~\ref{fig:intersection_of_pp}). Bold edges represent the perfect matching. }
 \label{fig:cases_config9}
\end{figure}

Summarizing, to prove Lemma~\ref{theo:main_theorem}.\textit{\ref{theo:main_theorem_item2}},
it suffices to prove that from each configuration of the projected paths $\pp(x,y)$ and $\pp(w,z)$ 
it is possible to generate all instances i), ii), iii) and iv) in case that $\pp(x,y) \neq \pp(w,z)$
and that it is possible to generate all instances i$'$), ii$'$), iii$'$) and iv$'$) in case that $\pp(x,y) = \pp(w,z)$.

Before we go into the analysis of the configurations we shall present an operation 
  consisting of a sequence of simple-augmentations that we constantly use in order to construct
  two new squares; we shall call this operation a \emph{basic square construction}.
This operation is very useful and crucial to reduce the number of cases.

The input of the basic square construction is the bipartite subgraph  
  graph depicted in Figure~\ref{fig:basic_square_0} with distinguished 
  edges $e_a$, $e_b$, $e_c$ and $e_d$. The output 
  is the bipartite subgraph depicted in Figure~\ref{fig:basic_square_2} with distinguished 
  edges $e_a$, $e'_b$, $e'_c$ and $e'_d$. 
In Figure~\ref{fig:basic_square_construction}, 
  we depict the sequence of simple-augmentations that compose the basic square construction.
It is clear that if $H$ is the graph obtained by applying the basic square construction 
  in a subgraph of a brace, then $H$ is a brace. In what follows, we constantly use this operation
  and the latter remark.

\begin{figure}[h]
  \centering 
    \subfigure[]
  { 
    \ifpdf\input{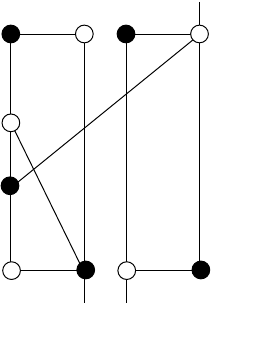_t}\else\input{basic_square_0.ps_t}\fi
    \label{fig:basic_square_0}
  } \hspace*{-1.2cm}
    \subfigure[]
  { 
    \ifpdf\input{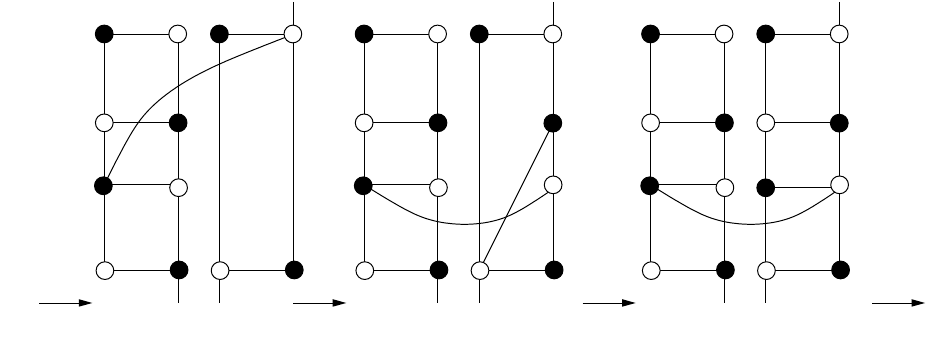_t}\else\input{basic_square_1.ps_t}\fi
    \label{fig:basic_square_1}
  } \hspace*{-1.2cm}
      \subfigure[]
  { 
    \ifpdf\input{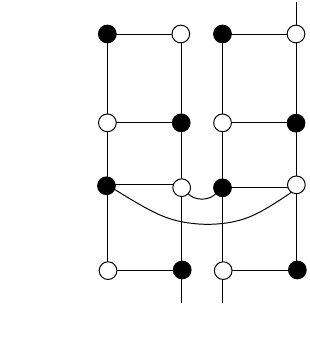_t}\else\input{basic_square_2.ps_t}\fi
    \label{fig:basic_square_2}
  } 
  \caption{Basic square construction. In subfigure (a) the input subgraph
  with distinguished edges $e_a$, $e_b$, $e_c$, $e_d$, in subfigure (b) the steps 
  of the basic square construction and in subfigure (c) the output 
  subgraph with distinguished edges $e_a$, $e'_b$, $e'_c$, $e'_d$,}
  \label{fig:basic_square_construction}
  \end{figure}

Let $V(s_x)= \{x_j: j \in \mathbb{Z}_4\}$, 
  $V(s_y)= \{y_j: j \in \mathbb{Z}_4\}$, $V(s_w)= \{w_j: j \in \mathbb{Z}_4\}$, $V(s_z)= \{z_j: j \in \mathbb{Z}_4\}$.
  Without loss of generality we assume that $\pp(x,y)=\{x_1y_1, x_2y_2\}$, $\pp(w,z)=\{w_1z_1, w_2z_2\}$, and 
  that $x_1$, $w_1,$ $y_2,$ $z_2$ are in the same partition class, depicted in black in Figure~\ref{fig:intersection_of_pp}.
We first study configurations 2, 5, and 7, then 3 and 8, afterwards configurations 4 and 6, and finally configurations 1 and 9.

\begin{description}
   \item[Configurations 2, 5 and 7] 
are respectively depicted in Figu\-res~\ref{case2},~\ref{case5} and~\ref{case7}.
These configurations have a common property, namely: with the notation of Figure~\ref{fig:intersection_of_pp}
each of these configurations may be obtained from Figure~\ref{fig:starting_case} by possible identifying 
$y_2, z_2$ (case of Configuration 5).
Next we show how to generate instances i), ii), iii) and iv) of Figure~\ref{fig:cases_matching-related_1}.

Generation of instances i) and iv): 
		Let $Q^{2}_{i-1}$ be the graph obtained from $Q_{i-1}$ by expanding the vertex $x_1$ 
	        to $x^1_1v_1x^2_1$ in such a way that the partition associated with the vertex $x^2_1$ is 
	        either $\{x_2,z_1\}$ if we are generating instance i), or $\{w_2,z_1\}$ if we are 
	        generating instance iv). Then, we add the new edge  $v_1z_2$ if we are generating instance i),
	        or $v_1y_2$ if we are generating instance iv) ---see Figures~\ref{fig:casei} and~\ref{fig:caseiv} 
	        without the bold edges for an ilustration of $Q^{2}_{i-1}$ in each case. Then, we consider
	        the graph $Q^{2,1}_{i-1}$ obtained from $Q^{2}_{i-1}$ by adding 
	        the bold edge. In Figures~\ref{fig:casei}
	        and~\ref{fig:caseiv}, the graph $Q^{2,1}_{i-1}$ is locally depicted for each case.
	        We get the desired instances by applying the basic square construction. We describe this 
	        in more details.
	        For instance i): with the notation of Figure~\ref{fig:basic_square_0} and~\ref{fig:casei},
		it is enough to consider $e_a =x^1_1w_2$, $e_b =z_1z_2$, $e_c =x^1_1x_2$ and $e_d =y_1y_2$. 
		For instance iv): with the notation of Figure~\ref{fig:basic_square_0} and~\ref{fig:casei},
		it is enough to consider $e_a =z_1z_2$, $e_b =x^1_1w_2$, $e_c =y_1y_2$ and $e_d =x^1_1x_2$. 
		
For generating instances ii) and iii): 
			     Let $Q^{2}_{i-1}$ be the graph obtained from $Q_{i-1}$ by expanding the vertex $x_1$ 
	        to $x^1_1v_1x^2_1$ in such a way that the partition associated with the vertex $x^2_1$ is $\{y_1,z_1\}$. 	
	        Then, we add the new edge  $v_1z_2$ if we are generating instance~ii), 
	        or $v_1y_2$ if we are generating instance~iii). Then, we consider
	        the graph $Q^{2,1}_{i-1}$ obtained from $Q^{2}_{i-1}$ by adding the new edge  
	        $x^1_1y_1$ if we are generating instance ii) 
	        or $x^1_1z_1$ if we are generating instance iii).	
	        In Figures~\ref{fig:caseii}
	        and~\ref{fig:caseiii}, the graph $Q^{2,1}_{i-1}$ is locally depicted for each case.
	        For instance ii): with the notation of Figure~\ref{fig:basic_square_0} and~\ref{fig:casei},
		it is enough to consider $e_a =x^1_1w_2$, $e_b =z_1z_2$, $e_c =y_1y_2$ and $e_d =x^1_1x_2$. 
		For instance iii): with the notation of Figure~\ref{fig:basic_square_0} and~\ref{fig:casei},
		it is enough to consider $e_a =x^1_1x_2$, $e_b =y_1y_2 $, $e_c =x^1_1w_2$ and $e_d =z_1z_2$. 
\begin{figure}[h]
  \centering 
    \subfigure[ ]
  { 
    \ifpdf\input{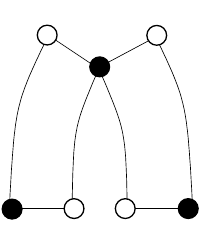_t}\else\input{case_x1y1z1.ps_t}\fi
    \label{fig:starting_case}
  } \qquad
    \subfigure[$Q^{2,1}_{i-1}$ for i)]
  { 
    \ifpdf\input{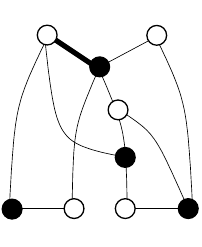_t}\else\input{case_x1y1z1_a.ps_t}\fi
    \label{fig:casei}
  } \qquad
  \subfigure[$Q^{2,1}_{i-1}$ for ii)]
  { 
    \ifpdf\input{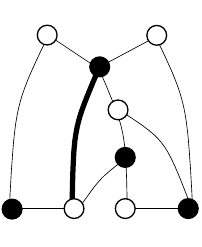_t}\else\input{case_x1y1z1_b.ps_t}\fi
    \label{fig:caseii}
  } \qquad
    \subfigure[$Q^{2,1}_{i-1}$ for iii)]
  { 
    \ifpdf\input{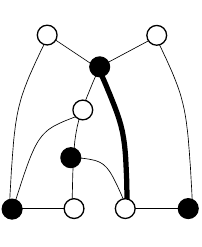_t}\else\input{case_x1y1z1_c.ps_t}\fi
    \label{fig:caseiii}
  } \qquad
    \subfigure[$Q^{2,1}_{i-1}$ for iv)]
  { 
    \ifpdf\input{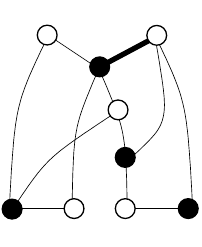_t}\else\input{case_x1y1z1_d.ps_t}\fi
    \label{fig:caseiv}
  } 
  \caption{Local view of $Q^{2,1}_{i-1}$ in the generation of instances i), ii), iii) and iv) for configurations
  2, 5 and~7. In each configuration, we may possible have $y_2 = z_2$. In (a), for obtaining Configurations 3 
  and 8 it is enough to identify $x_2$ and $w_2$, and delete multiple edges.}
  \label{fig:caseab_degree4}
  \end{figure}
	        
   \item[Configurations 3 and 8] are respectively depicted in Figu\-res~\ref{case3} and~\ref{case8}.
   Note that with the notation of Figure~\ref{fig:intersection_of_pp}, both configurations
   may be obtained from Figure~\ref{fig:starting_case} by identifying 
   $x_2, w_2$ and by removing the double edge. Therefore, the reasoning for configurations 2, 5 and 7 applies also 
   for configurations 3 and 8.

  \item[Configurations 4 and 6. ]
These configurations are respectively depicted
 in Figures~\ref{case4} and~\ref{case6}.
With the notation of Figure~\ref{fig:intersection_of_pp}, both configurations can be
locally depicted as in Figure~\ref{fig:starting_casex1w2}.
Moreover, using the symmetry of both configurations 4 and 6, without loss of generality we can assume that
either the degree of $x_1$ and $w_2$ in $Q_{i-1}$ is $4$
or the degree of $x_1$ and $x_2$ in $Q_{i-1}$ is $4$.  Therefore, in either case 
we are allowed to expand $x_1$.
Next we show how to generate each instances i), ii), iii) and iv) of Figure~\ref{fig:cases_matching-related_1}.

Generation of instance i): 
		Let $Q^{2}_{i-1}$ be the graph obtained from $Q_{i-1}$ by expanding the vertex $x_1$ 
	        to $x^1_1u_1x^2_1$ in such a way that the partition associated with the vertex $x^2_1$ is 
	        $\{w_2,y_1\}$. Then, we add the new edge $u_1y_2$.  
	        Consider the Figure~\ref{fig:casei_46} without the bold edge for a local ilustration of $Q^{2}_{i-1}$.
	        Then, we consider the graph $Q^{2,1}_{i-1}$ obtained from $Q^{2}_{i-1}$ by adding the new edge $x^1_1w_2$,
	        namely, the bold edge of Figure~\ref{fig:casei_46}.
	        We finally obtain the desired instance i) by applying the basic square construction
	        in the same fashion as for the case of Configurations 2, 5, and 7.

Generation of instances ii) and iv):  
		Let $Q^{2}_{i-1}$ be the graph obtained from $Q_{i-1}$ by expanding the vertex $x_1$ 
	        to $x^1_1u_1x^2_1$ in such a way that the partition associated with the vertex $x^2_1$ is 
	        $\{x_2,y_1\}$. Then we add the new edge $u_1w_1$ if we are generating instance ii),
	        or $u_1z_2$ if we are generating instance iv).    
	        Then, we consider the graph $Q^{2,1}_{i-1}$ obtained from $Q^{2}_{i-1}$ adding the new edge $x^1_1x_2$
	        (bold edge in Figures~\ref{fig:caseii_46} and~\ref{fig:caseiv_46}). 
	        In Figures~\ref{fig:caseii_46} and~\ref{fig:caseiv_46}, the graph $Q^{2,1}_{i-1}$ for the
	        generation of both instances is locally depicted. Again, we get the desired instances ii) and iv) 
	        by applying the basic square construction in the same fashion as for the case of Configurations 2, 5, and 7.

\begin{figure}[h]
  \centering 
    \subfigure[]
  { 
    \ifpdf\input{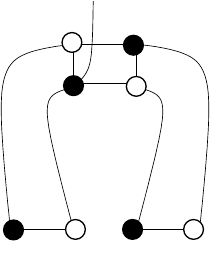_t}\else\input{case_x1w2.ps_t}\fi
    \label{fig:starting_casex1w2}
  } \qquad
    \subfigure[$Q^{2,1}_{i-1}$ for i)]
  { 
    \ifpdf\input{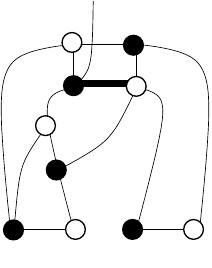_t}\else\input{case_x1w2_a.ps_t}\fi
    \label{fig:casei_46}
  } \qquad
  \subfigure[$Q^{2,1}_{i-1}$ for ii)]
  { 
    \ifpdf\input{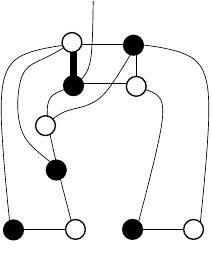_t}\else\input{case_x1w2_b.ps_t}\fi
    \label{fig:caseii_46}
  } \qquad
    \subfigure[$Q^{2,1^*}_{i-1}$ for iii)]
  { 
    \ifpdf\input{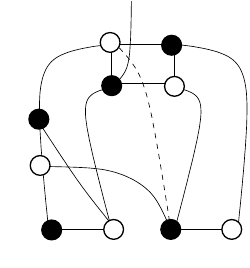_t}\else\input{case_x1w2_c.ps_t}\fi
    \label{fig:caseiii_46}
  } \qquad
    \subfigure[$Q^{2,1}_{i-1}$ for iv)]
  { 
    \ifpdf\input{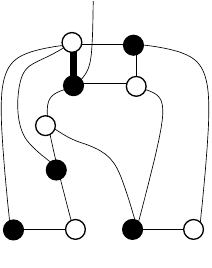_t}\else\input{case_x1w2_d.ps_t}\fi
    \label{fig:caseiv_46}
  } 
  \caption{Local view of $Q^{2,1}_{i-1}$ or $Q^{2,1^*}_{i-1}$ in the generation of instances i), ii), iii) and iv) for configurations
  4 and 6. In each configuration, we have that $y_2 \neq z_2$ and $y_1 \neq z_1$.
  In case (d), the edge $x^1_2z_2$ may exist.} 
   \label{fig:caseab_degree4}
  \end{figure}	   

Generation of instance iii): If the edge $x_2z_2 \notin E(Q_{i-1})$, then add $x_2z_2$. We denote by $Q^{1}_{i-1}$ 
		either the graph obtained from $Q_{i-1}$ by adding $x_2z_2$ or, the graph  $Q_{i-1}$ such that 
		 $x_2z_2 \in E(Q_{i-1})$. Hence, $z_2$, $y_2$ are neighbors of $x_2$ in $Q^{1}_{i-1}$
		and clearly $y_2 \neq z_2$ (see Figures~\ref{case4} and~\ref{case6}).
		Let $Q^{2,1}_{i-1}$ be the graph obtained 
		from $Q^{1}_{i-1}$ by expanding the vertex $x_2$ to $x^1_2u_1x^2_2$ in such a way that the partition 
		associated with the vertex $x^2_2$ is 
	        $\{z_2,y_2\}$. Then, we add the new edge $u_1y_1$.
	        Then we obtain $Q^{2,1^*}_{i-1}$ in the following way:
	        if the edge $x_2z_2 \in E(Q_{i-1})$, then we obtain $Q^{2,1^*}_{i-1}$ 
	        from $Q^{2,1}_{i-1}$ by adding the new edge $x^1_2z_2$.
	        Otherwise, $Q^{2,1^*}_{i-1}= Q^{2,1}_{i-1}$.
	        In Figure~\ref{fig:caseiii_46} the graph $Q^{2,1^*}_{i-1}$ is locally depicted. 
	        Again, we use the basic square construction
	        to complete the generation.

	 \item[Configuration 1.] This configuration is depicted in Figure~\ref{case1}. 
	 By the symmetry of configuration 1 it suffices to show that 
	 we can generate instance iii); this can be generated in the same
	 fashion as the previous instance iii) for configurations 4 and 6.


	 \item[Configuration 9] is depicted in Figure~\ref{case9}. To make things easier, 
we depict in Figure~\ref{fig:onefatedge_match-match} the subgraphs that we want to generate
from Configuration 9; they correspond to the instances i$'$), ii$'$), iii$'$) and iv$'$) of Figure~\ref{fig:cases_config9}.

\begin{figure}[h]
  \centering 
    \subfigure[Instance i')]
  { 
    \ifpdf\input{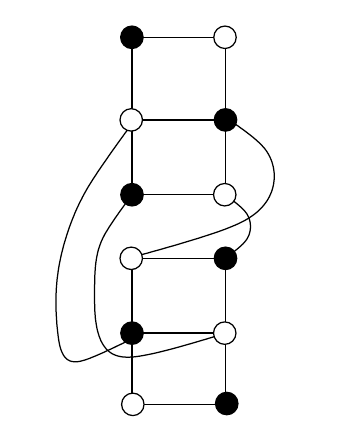_t}\else\input{fatedge_match1_1.ps_t}\fi
    \label{fig:fatedge_1-b}
  } 
      \subfigure[Instance ii') and iv')]
  { 
    \ifpdf\input{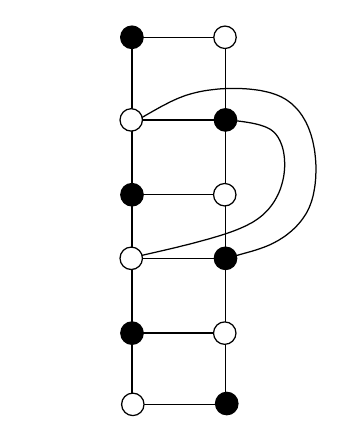_t}\else\input{fatedge_match2.ps_t}\fi
    \label{fig:fatedge_2}
  } 
    \subfigure[Instance iii') and iv')]
  { 
    \ifpdf\input{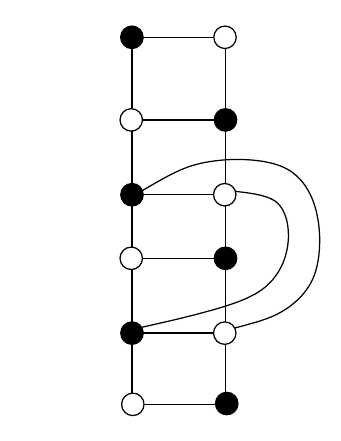_t}\else\input{fatedge_match3.ps_t}\fi
    \label{fig:fatedge_3}
  } 
  \caption{Instances i'), ii'), iii') and iv') of Figure~\ref{fig:cases_config9} for configuration 9.}
  \label{fig:onefatedge_match-match}
  \end{figure}

	      We first focus on the generation of the configurations depicted in Figure~\ref{fig:fatedge_2}
	      and Figure~\ref{fig:fatedge_3}. By symmetry, it suffices to generate only one of them, 
	      say we generate the configuration
	      depicted in Figure~\ref{fig:fatedge_3}.

	      We split this case into two subcases: (*) at least one vertex of $\{x_1,x_2\}$ has degree $4$
	      in $Q_{i-1}$ and (**) $x_1$ and $x_2$ have degree 3 in $Q_{i-1}$.
	      
	      \begin{description}
	       \item{subcase (*):} 
without loss of generality we assume that $x_1$ has degree 4 in $Q_{i-1}$. Consider
	      the graph $Q^{2,1}_{i-1}$ obtained from $Q_{i-1}$ by expanding $x_1$ to $x^1_1ux^2_1$ in such a way that 
	      the partition associated with the vertex $x^2_1$ is $\{x_2,y_1\}$. Then we add the new edges 
	      $uy_2$ and $x_2x^1_1$. Next, we consider the graph $Q^{2,1,2}_{i-1}$ 
	      obtained from $Q^{2,1}_{i-1}$ by expanding $y_2$ to $y^1_2vy^2_2$ in such a way that 
	      the partition associated with the vertex $y^2_2$ is $\{u,x_2\}$. Then we add the new edge 
	      $vx^2_1$.  Furthermore, let $Q^{2,1,2,2}_{i-1}$ be the graph obtained from 
	      $Q^{2,1,2}_{i-1}$ by expanding $x_2$ to $x^1_2u'x^2_2$ in such a way that 
	      the partition associated with the vertex $x^2_2$ is $\{y^2_2,x^2_1\}$. Then we add the new edge 
	      $uu'$.  We finally consider the graph $Q^{2,1,2,2,2,1}_{i-1}$ 
	      obtained from $Q^{2,1,2,2}_{i-1}$ by expanding $u$ to $u^1 w u^2$ in such a way that 
	      the partition associated with the vertex $u^2$ is $\{u',x^1_1\}$. Then we add the new edges 
	      $wx^2_2$ and $wv$. The graph $Q^{2,1,2,2,2,1}_{i-1}$ is locally equal to
	      the subgraph depicted in Figure~\ref{fig:fatedge_3}.    
	     
\item{subcase (**):} we recall that the set of vertices 
	      of the square $s_x$ is given by
	      $\{x_i: i\in\mathbb{Z}_4\}$.
	       Then, by Proposition~\ref{lemma:degrees}
	      the vertices $x_0$ and $x_3$ have degree 4.  Let $Q^{2,1}_{i-1}$ be the graph obtained 
	      from $Q_{i-1}$ by expanding $x_3$ to $x^1_3ux^2_3$ in such a way that 
	    the partition associated with the vertex $x^2_3$ is $\{x_0,x_2\}$. Then we add the new edges 
	    $ux_1$ and $x^1_3x_0$. Next, we consider $Q^{2,1,2}_{i-1}$ the graph obtained from 
	    $Q^{2,1}_{i-1}$ by expanding $x_0$ to $x^1_0vw$ in such a way that 
	    the partition associated with the vertex $w$ is $\{x_1,x^2_3\}$. Then we add the new edge 
	    $vu$. Then, we consider $Q^{2,1,2,2}_{i-1}$ the graph obtained from 
	    $Q^{2,1,2}_{i-1}$ by expanding $u$ to $u^1 z u^2$ in such a way that 
	    the partition associated with the vertex $u^2$ is $\{x^2_3,x^1\}$. Then we add the new edge 
	    $zw$. We now consider $Q^{2,1,2,2,2,1}_{i-1}$ the graph obtained from 
	    $Q^{2,1,2,2}_{i-1}$ by expanding $w$ to $w^1v'w^2$ in such a way that 
	    the partition associated with the vertex $w^2$ is $\{v,z\}$. Then we add the new edges 
	    $u^2 v'$ and $x_2v'$. The graph $Q^{2,1,2,2,2,1}_{i-1}$ contains the desired instance
	    (see Figure~\ref{fig:fatedge_3}).

 \end{description}

We now show the generation of the configuration depicted in Figure~\ref{fig:fatedge_1-b}.
Again we split this case into two subcases: (*) at least one vertex in  $\{x_1,x_2,y_1,y_2\}$ has degree 4 in $Q_{i-1}$
and (**) all vertices in  $\{x_1,x_2,y_1,y_2\}$ have degree 3 in $Q_{i-1}$.
 
 \begin{description}
  \item{subcase (*):}
	    without loss of generality we suppose that $x_1$ has degree 4 in $Q_{i-1}$. 
	    We now consider $Q^{2,1}_{i-1}$ the graph obtained from 
	    $Q_{i-1}$ by expanding $x_1$ to $x^1_1ux^2_1$ in such a way that 
	    the partition associated with the vertex $x^2_1$ is $\{x_2,y_1\}$. Then we add the new edges 
	    $uy_2$ and $x^1_1x_2$. Let $Q^{2,1,2}_{i-1}$ be the graph obtained from $Q^{2,1}_{i-1}$ 
	    by expanding $x_2$ to $x^1_2u'x^2_2$ in such a way that 
	    the partition associated with the vertex $x^2_2$ is $\{x^2_1,y_2\}$. Then we add the new edge 
	    $uu'$. We shall consider $Q^{2,1,2,2}_{i-1}$ obtained from $Q^{2,1,2}_{i-1}$ 
	    by expanding $y_2$ to $y^1_2vy^2_2$ in such a way that 
	    the partition associated with the vertex $y^2_2$ is $\{u,x^2_2\}$. Then we add the new edge 
	    $vx^2_1$. Let $Q^{2,1,2,2,2,1}_{i-1}$ be the graph obtained from $Q^{2,1,2,2}_{i-1}$ 
	    expanding $u$ to $u^1 v' u^2$ in such a way that 
	    the partition associated with the vertex $u^2$ is $\{u',y^2_2\}$. Then we add the new edges 
	    $v'v$ and $u^1u'$. The graph $Q^{2,1,2,2,2,1}_{i-1}$ contains the desired instance (see Figure~\ref{fig:fatedge_1-b}).

  \item{subcase (**):}
	    by Proposition~\ref{lemma:degrees} we have that all vertices in $\{x_0,x_3,y_0,y_3\}$ 
	    have degree 4 in $Q_{i-1}$. 
	    We consider $Q^{2}_{i-1}$ the graph obtained from $Q_{i-1}$ 
	    by expanding $x_0$ to $x^1_0 u x^2_0$ in such a way that 
	    the partition associated with the vertex $x^2_0$ is $\{x_1,x_3\}$. Then we add the new edges 
	    $ux_2$ and $x^1_0x_3$.
	     We consider $Q^{2,1,2}_{i-1}$ the graph obtained from $Q^{2,1}_{i-1}$ by
	    expanding $x_3$ to $x^1_3 u' x^2_3$ in such a way that 
	    the partition associated with the vertex $x^2_3$ is $\{x^2_0,x_2\}$. Then we add the new edge 
	    $uu'$. Let  $Q^{2,1,2,2}_{i-1}$ be the graph obtained from $Q^{2,1,2}_{i-1}$ by
	    expanding $u$ to $u^1 v u^2$ in such a way that 
	    the partition associated with the vertex $u^2$ is $\{x^2_0,x_2\}$. Then we add the new edge 
	    $vx^2_3$. We consider  $Q^{2,1,2,2,2,1}_{i-1}$ the graph obtained from $Q^{2,1,2,2}_{i-1}$ by 
	    expanding $x_2$ to $x^1_2 v' x^2_2$ in such a way that 
	    the partition associated with the vertex $x^2_2$ is $\{u^2,x_1\}$. Then we add the new edge 
	    $vv'$ and $x_1x^1_2$.  The graph $Q^{2,1,2,2,2,1}_{i-1}$ contains the desired instance.

 \end{description}

  \end{description}

\end{document}